\numberwithin{equation}{section}
 \font\tencyr=wncyr10 
\font\tencyi=wncyi10 
\font\tencysc=wncysc10 
\def\rus{\tencyr\cyracc}
\def\rusi{\tencyi\cyracc}
\def\rusc{\tencysc\cyracc}
\newtheorem{thm}{Theorem}[section]
\newtheorem{lm}[thm]{Lemma}
\newtheorem{cl}[thm]{Corollary}
\newtheorem{prop}[thm]{Proposition}
\theoremstyle{remark}
\newtheorem{ex}[thm]{Example}
\newtheorem*{rem}{Remark}
\theoremstyle{definition}
\newtheorem{rmk}[thm]{Remark}
\newtheorem{df}{Definition}
\newtheorem*{qtn}{Question}
\newcommand {\ce}{{\mathfrak c}}
\newcommand {\g}{{\mathfrak g}}
\newcommand {\h}{{\mathfrak h}}
\newcommand {\q}{{\mathfrak q}}
\newcommand {\es}{{\mathfrak s}}
\newcommand {\te}{{\mathfrak t}}
\newcommand {\X}{{\mathfrak X}}
\newcommand {\Y}{{\mathfrak Y}}
\newcommand {\sltn}{{\mathfrak{sl}_{2n}}}
\newcommand {\N}{{\mathcal N}}
\newcommand {\co}{{\mathcal O}}
\newcommand {\cs}{{\mathcal S}}
\newcommand {\ml}{\mathcal}
\newcommand {\BZ}{{\mathbb Z}}
\newcommand {\BN}{{\mathbb N}}
\newcommand {\md}{/\!\!/}
\newcommand{\ap}{\alpha}
\newcommand{\vp}{\varphi}
\renewcommand{\le}{\leqslant}
\renewcommand{\ge}{\geqslant}
\newcommand{\eus}{\EuScript}
\newcommand {\ads}{{\mathrm{ad}^*}}
\newcommand {\codim}{{\mathrm{codim}}}
\newcommand {\gr}{{\mathrm{gr\,}}}
\newcommand {\hot}{\operatorname{\mathsf{ht}}}
\newcommand {\ind}{{\mathrm{ind\,}}}
\newcommand {\Ker}{{\mathrm{Ker\,}}}
\newcommand {\Ima}{{\mathrm{Im\,}}}
\newcommand {\Der}{\operatorname{\mathsf{Der}}}
\newcommand {\Mor}{\operatorname{\mathsf{Mor}}}
\newcommand {\Sym}{\operatorname{\mathsf{Sym}}}
\newcommand {\rk}{{\mathrm{rk\,}}}
\newcommand {\spe}{{\mathrm{Spec\,}}}
\newcommand {\tr}{{\mathrm{tr\,}}}
\newcommand {\trdeg}{{\mathrm{trdeg\,}}}
\newcommand {\tri}{\mathfrak{sl}_2}
\newcommand {\GR}[2]{{\textrm{{\bf #1}}}_{#2}}
\newcommand {\ov}{\overline}
\newcommand {\un}{\underline}
\newcommand {\bbk}{\Bbbk}
\begin{document}
\setlength{\parskip}{3pt plus 5pt minus 0pt}
\hfill { {\color{blue}\scriptsize August 15, 2008}}
\vskip1ex

\title[Adjoint vector fields and differential operators]
{Adjoint vector fields and differential operators on representation spaces}
\author[D.\,Panyushev]{Dmitri I.~Panyushev}
\address[]{Institute for Information Transmission Problems, B. Karetnyi per. 19, Moscow 101447\hfil\break\indent
Independent University of Moscow,
Bol'shoi Vlasevskii per. 11, 119002 Moscow, \ Russia}
\subjclass[2000]{14L30, 16S32, 17B20, 22E47}
\email{panyush@mccme.ru}
\urladdr{http://www.mccme.ru/~panyush}
\thanks{Supported in part by  R.F.B.R. grant
06--01--72550.}
\begin{abstract} 
Let $G$ be a semisimple algebraic group with Lie algebra $\g$.
In 1979, J.~Dixmier proved that any vector field annihilating all 
$G$-invariant polynomials on $\g$ lies in the $\bbk[\g]$-module generated by the 
"adjoint vector fields", i.e., vector fields $\varsigma$ of the form 
$\varsigma(y)(x)=[x,y]$, $x,y\in\g$.
A substantial generalisation of Dixmier's theorem was found
by Levasseur and Stafford. They explicitly described the centraliser of $\bbk[\g]^G$
in the algebra of differential operators on $\g$. On the level of vector fields,
their result reduces to Dixmier's theorem.
The purpose of this paper is to explore similar problems in the general context
of affine algebraic groups and their rational representations.
\end{abstract} 
\maketitle


\section*{Introduction}

\noindent
We work over an algebraically closed field $\bbk$ of characteristic zero.
Throughout, $G$ is a connected affine algebraic group with Lie algebra $\g$.
Suppose for a while that $G$ is semisimple.
In 1979, Jacques Dixmier proved a nice theorem on vector fields
on $\g$. Specifically, he showed that any vector field annihilating all 
$G$-invariant polynomials on $\g$ lies in the $\bbk[\g]$-module generated by the 
"adjoint vector fields" \cite[Theorem\,2.1]{dixm}. 
A substantial generalisation of Dixmier's theorem was found
by Levasseur and Stafford \cite{ls}.  They explicitly described the centraliser of $\bbk[\g]^G$
in the algebra of differential operators on $\g$. On the level of vector fields,
their result reduces to Dixmier's theorem.
The purpose of this paper is to explore similar problems in the general context
of affine algebraic groups and their rational representations.

We show that Dixmier's argument  applies to the coadjoint
representations of the so-called '3-wonderful' Lie algebras. Furthermore, the coadjoint representation can be replaced with an arbitrary (finite-dimensional) representation,
and this leads to  
three types of interesting problems. 
Let now $G$ be an arbitrary connected group. 
We say that $\g$ is {\it 3-wonderful\/} if: 
{\sf (i)} \ $\codim(\g^*\setminus \g^*_{reg})\ge 3$, 
{\sf (ii)} \ $\bbk[\g^*]^G$ is a polynomial algebra of Krull dimension $\ind\g$, and 
{\sf (iii)} \ the sum of degrees of free homogeneous generators of $\bbk[\g^*]^G$ 
equals $(\dim\g+\ind\g)/2$.  (Here $\g^*_{reg}$ is 
the union of $G$-orbits of maximal dimension in $\g^*$.)

This definition intends to axiomatise good properties of reductive Lie algebras.
There is also a method for generating new 3-wonderful algebras:  if $\g$ is $3$-wonderful and
$\g\simeq\g^*$, then the semi-direct product  $\g\ltimes\g$ has the same properties.
Below is a Dixmier-type result for the coadjoint representation of a 3-wonderful Lie algebra 
$\g$.

\begin{thm}   \label{thm:0.1}
Let $\X$ be a polynomial vector field on $\g^*$. Assume that $\X$ annihilates
all of\/ $\bbk[\g^*]^G$. Then there is a polynomial mapping $\Y:\g^*\to \g$
such that $\X(\xi)=\Y(\xi){\cdot}\xi$ for any $\xi\in\g^*$.
\end{thm}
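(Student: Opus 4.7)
The plan is to imitate Dixmier's proof of his adjoint-vector-field theorem, with the role of semisimplicity played by the 3-wonderful hypotheses. Introduce the $\bbk[\g^*]$-linear map
\[
\phi \colon \bbk[\g^*] \otimes \g \longrightarrow \bbk[\g^*] \otimes \g^*
\]
that sends $y \in \g$ to the coadjoint vector field $\xi \mapsto y{\cdot}\xi$. Its image is precisely the submodule of vector fields of the form $\xi \mapsto \Y(\xi){\cdot}\xi$ with polynomial $\Y \colon \g^* \to \g$, and by $G$-invariance it is contained in the $\bbk[\g^*]$-submodule $K$ of polynomial vector fields annihilating $\bbk[\g^*]^G$. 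The conclusion of the theorem is thus equivalent to $\Ima \phi = K$.

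First I analyse $\phi$ on the regular locus $\g^*_{reg}$. Fix free homogeneous generators $f_1,\dots,f_\ell$ of $\bbk[\g^*]^G$ with $\ell = \ind\g$. At every $\xi\in\g^*$, $G$-invariance gives $df_i(\xi)\in\g_\xi$; at regular $\xi$ one has $\dim\g_\xi=\ell$; and the numerical equality in (iii), combined with polynomiality (ii), is the classical Rais/Kostant criterion, which forces $df_1(\xi),\dots,df_\ell(\xi)$ to be linearly independent on $\g^*_{reg}$. They therefore form a basis of $\g_\xi$ there. Consequently, over $\g^*_{reg}$, $\Ker\phi$ is the free $\co_{\g^*_{reg}}$-module of rank $\ell$ with frame $df_1,\dots,df_\ell$, while $\Ima\phi$ is the rank-$(\dim\g-\ell)$ subbundle of $\co_{\g^*_{reg}}\otimes\g^*$ cut out by the equations $\langle df_i,-\rangle=0$, which coincides fibrewise with $K$. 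This yields a short exact sequence of locally free sheaves
\[
0 \longrightarrow \Ker\phi \longrightarrow \co_{\g^*_{reg}}\otimes\g \stackrel{\phi}{\longrightarrow} K|_{\g^*_{reg}} \longrightarrow 0.
\]

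Given $\X\in K$, its restriction $\X|_{\g^*_{reg}}$ will admit a polynomial lift $\Y_0 \in \bbk[\g^*_{reg}]\otimes\g$ as soon as $H^1(\g^*_{reg},\Ker\phi)=0$. Since $\Ker\phi$ is free of finite rank on $\g^*_{reg}$, this reduces to $H^1(\g^*_{reg},\co_{\g^*_{reg}})=0$. As $\g^*$ is affine, the local-cohomology exact sequence identifies the latter with $H^2_Z(\g^*,\co_{\g^*})$, where $Z=\g^*\setminus\g^*_{reg}$, and Grothendieck's depth-versus-codimension vanishing theorem kills this group precisely when $\codim Z\ge 3$ --- this is the role of hypothesis (i). Finally, since $\codim Z\ge 2$ and $\g^*$ is smooth affine, algebraic Hartogs extends $\Y_0$ uniquely to a polynomial map $\Y\colon\g^*\to\g$; the identity $\phi(\Y)=\X$ then holds on the dense open $\g^*_{reg}$, and hence on all of $\g^*$.

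I expect the main obstacle to be the Rais/Kostant step --- that (ii) and (iii) together force $\{df_i(\xi)\}$ to be pointwise linearly independent on $\g^*_{reg}$. This is exactly what matches the ranks of $\Ima\phi$ and $K$ and makes $\Ker\phi$ free on the regular locus, after which the codimension-$\ge 3$ cohomological descent along the short exact sequence above is essentially formal.
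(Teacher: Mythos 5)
Your proposal is correct and follows essentially the same route as the paper's proof: pointwise linear independence of the $(\textsl{d}f_i)_\xi$ on $\g^*_{reg}$ (note this criterion also uses the codim--$2$ consequence of hypothesis (i), not only (ii) and (iii)), the resulting trivialisation of the kernel bundle, vanishing of $H^1$ on the regular locus via the local cohomology sequence and the codim--$3$ hypothesis, and extension of the regular lift across the codimension $\ge 2$ complement. There is no substantive difference in approach.
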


The proof is essentially based on the fact that $\codim(\g^*\setminus \g^*_{reg})\ge 3$ 
and certain
vector bundle on $\g^*_{reg}$ appears to be trivial. A posteriori, this is related to 
some good properties of a homomorphism of $\bbk[\g^*]$-modules.
Let $\Mor(V,N)$ denote the set of all polynomial morphisms $V\to N$, where $V$ and $N$
are $\bbk$-vector spaces. It is a free graded $\bbk[V]$-module of rank $\dim N$.
Consider the  homomorphism 
$\hat\phi :\Mor(\g^*,\g)\to \Mor(\g^*,\g^*)$,
$\hat\phi(F)(\xi):= F(\xi){\cdot}\xi$ for $\xi\in\g^*$. 
Then Theorem~\ref{thm:0.1} merely says that 
$\Ima\hat\phi$ equals the submodule of vector fields annihilating all of $\bbk[\g^*]^G$.
Furthermore, in the "3-wonderful case" the kernel of $\hat\phi$ appears to be a free 
$\bbk[\g^*]$-module generated by $G$-equivariant morphisms.

For an arbitrary $G$-module $V$, where $\g$ is not necessarily $3$-wonderful, 
one can write up {\sl three\/} similar homomorphisms and consider similar problems. 
The most obvious possibility is to replace 
$\g^*$ with $V$.  This yields the homomorphism of $\bbk[V]$-modules
$\hat\phi:\Mor(V,\g)\to \Mor(V,V)$. Clearly,  any vector field $\X\in\Ima\hat\phi$ annihilates all 
of $\bbk[V]^G$. The problem on the opposite inclusion is related to 
the structure of $\Ker\hat\phi$, $\bbk[V]^G$, and $V_{reg}$, and we provide an appropriate 
analogue of Theorem~\ref{thm:0.1}.
Two other possibilities are
\[
\hat\psi: \Mor(V,V^*){\to} \Mor(V,\g^*) \ \text{ and } \
\hat\tau: \Mor(\g,V) {\to} \Mor(\g,V) ,
\]
where we also describe the respective images under similar conditions, and give some
illustrations.

Generalising the approach of \cite{ls}, we regard the problem on  
$\Ima\hat\phi$ as a special case of a problem on differential operators on $V$.
Let $\ml D(V)$ denote the ring of differential operators on $V$ with polynomial coefficients.
As $\bbk[V]$ is identified with the differential operators of order zero,  one can consider
the centraliser of $\bbk[V]^G$ in $\ml D(V)$,
$\mathsf{Cent}_{\ml D(V)}(\bbk[V]^G)$. Each $x\in\g$ gives rise to a linear operator on $V$
and therefore a vector field. In this way, one obtains the Lie algebra homomorphism 
\[
\boldsymbol{\varsigma}=\boldsymbol{\varsigma}_V: \g\to \{\text{polynomial vector fields on $V$}\}
\subset \ml D(V).
\] 
The elements of $\boldsymbol{\varsigma}(\g)$ are called the {\it adjoint vector fields} (on $V$). By the definition of $\hat\phi$, 
$\Ima\hat\phi$ is the $\bbk[V]$-module generated by $\boldsymbol{\varsigma}(\g)$.
Clearly, $\eus C:=\mathsf{Cent}_{\ml D(V)}(\bbk[V]^G)$ contains $\bbk[V]$ and $\boldsymbol{\varsigma}(\g)$, 
and one may ask whether the subalgebra generated by $\bbk[V]$ and $\boldsymbol{\varsigma}(\g)$, denoted 
$\eus A$, is equal to $\eus C$. On the level of vector fields, the equality $\eus A=\eus C$
reduces to the assertion
that any $\X$ annihilating all of $\bbk[V]^G$ lies in $\Ima\hat\phi$, i.e., in
$\bbk[V]\boldsymbol{\varsigma}(\g)$.

For the adjoint representation of a semisimple Lie algebra, the equality $\eus A=\eus C$ 
is proved in \cite{ls}. Adapting that method, we obtain a sufficient condition for 
$\eus A=\eus C$ in a more general framework. 
We assume that
$\bbk[V]^G$ is a polynomial algebra and $\Ker\hat\phi$ is a free $\bbk[V]$-module, and
impose determinantal constraints on the embedding $\Ker\hat\phi\to \Mor(V,\g)$.
However, the reductivity of $G$ is not assumed. 
(See 
Theorem~\ref{thm:main4-2} for precise formulations). 
The equality $\eus A=\eus C$  and other results on $\eus C$ stem from 
assertions about  certain $G$-stable subvariety of $V\times \g^*$.
Recall that for any $G$-module $V$, there is the moment map 
$\mu: V\times V^*\to \g^*$. Then $\varkappa:
V\times V^* \to V\times \g^*$ is defined
by letting $\varkappa(v,\xi)=(v, \mu(v,\xi))$. Consider also the $\bbk[V]$-module
$E=\Ima\hat\phi$ and its symmetric algebra,  $\mathsf{Sym}_{\bbk[V]}(E)$.
Under appropriate constraints (alluded to above), we prove that
\begin{itemize}
\item  $\mathsf{Sym}_{\bbk[V]}(E)$ is a factorial domain of Krull dimension $\dim V+\dim\g-
\rk(\Ker\hat\phi)$;
\item   
$\ov{\Ima\varkappa}=\spe(\mathsf{Sym}_{\bbk[V]}(E))$ and it is also a complete intersection in 
$V\times \g^*$;
\item the generators of the ideal of $\ov{\Ima\varkappa}$
 are determined by a basis of $\Ker\hat\phi$. 
\end{itemize}
From this, we deduce that
$\gr\eus A=\gr\eus C=\mathsf{Sym}_{\bbk[V]}(E)$, where $\gr(.)$ is the associated graded
ring with respect to the  filtration by the order of differential operators.
Then the equality $\eus A=\eus C$ follows. We also give a sufficient condition for
$\eus C$ to be a free $\bbk[V]^G$-module (see Theorem~\ref{thm:C_is_free}). 

Let  $\g=\g_0\oplus\g_1$ be a  $\BZ_2$-graded semisimple Lie algebra.
This grading (or the symmetric pair $(\g,\g_0)$) is said to be $\N$-{\it regular\/}, if
$\g_1$ contains a regular nilpotent element of $\g$.
Our main application concerns the isotropy representation $(G_0{:}\g_1)$.
We show that the hypotheses of Theorems~\ref{thm:main4-2} and \ref{thm:C_is_free}
are satisfied, modulo one exception, for $(G_0{:}\g_1)$  if  $(\g,\g_0)$ is 
$\N$-regular. Hence  $\eus A=\eus C$ and 
$\eus C$ is a free $\bbk[\g_1]^{G_0}$-module  
in these cases. Verification of all necessary conditions requires a detailed information
on the structure of the null-cone in the $G_0$-module $\g_1$.
We also provide other examples of the ``$\eus A=\eus C$ phenomenon'';
in particular, those for the coadjoint representation of non-reductive Lie algebras.

The plan of the article is as follows.
Section~\ref{prelim} contains preliminaries on group actions and differential operators.
In Section~\ref{sect:kost}, we prove our analogue of Dixmier's result for the coadjoint representation of a $3$-wonderful Lie algebra. 
Then we discuss, in Section~\ref{sect:mod}, 
three generalisations to the case in which
$\g^*$ is replaced with an arbitrary $G$-module. Section~\ref{sect:diff_op} 
contains our results on the image of $\varkappa$, $\mathsf{Sym}_{\bbk[V]}(E)$, 
and the equality $\eus A=\eus C$. In Section~\ref{sect:z2}, 
we consider applications to $\BZ_2$-gradings of semisimple Lie algebras and provide some
other examples.
In Section~\ref{sect:some_spec}, we discuss possible connections between our 
results for $\BZ_2$-gradings and another generalisation of Dixmier's result
obtained in \cite{ls96, ls99}.

\un{\sl Some notation.}
If an algebraic group $G$ acts on an irreducible affine variety $X$, then $\bbk[X]^G$ 
is the algebra of $G$-invariant regular functions on $X$ and $\bbk(X)^G$
is the field of $G$-invariant rational functions. If $\bbk[X]^G$
is finitely generated, then $X\md G:=\spe \bbk[X]^G$, and
the {\it quotient morphism\/} $\pi_X: X\to X\md G$ is the mapping associated with
the embedding $\bbk[X]^G \hookrightarrow \bbk[X]$.
We use dot `$\cdot$' to denote the action of (elements of) $G$ and $\g$ on $X$.
For instance, $G{\cdot}x$ is the orbit of $x\in X$. 
The stabiliser of $x$ in $\g$ is denoted  by $\g_x$.
\\   \indent
All topological terms refer to the Zariski topology.
The pairing of dual vector spaces is denoted by $\langle\ ,\ \rangle$.
If $M$ is a subset of a vector space, then $\mathsf{span}(M)$ denotes the linear span of
$M$. 

{\small {\bf Acknowledgements.}  
I am grateful to Thierry Levasseur for sending me
his unpublished notes and useful e-mail exchange.}

\section{Preliminaries} 
\label{prelim}

\subsection{}
Let $G$ be an affine algebraic group acting regularly on an 
irreducible algebraic variety $X$. 
We say that $\h\subset \g$ is {\it a generic stabiliser\/} for the 
the action $(G:X)$
if there exists
a dense open subset $\Omega\subset X$ such that all stabilisers $\g_x$, $x\in \Omega$,
are $G$-conjugate to $\h$. 
The points of such an $\Omega$ are said to be {\it generic}.
Generic stabilisers always exist if $G$ is reductive and  $X$ is  smooth \cite{Ri}.

Let $X_{reg}$ denote the set of all {\it regular\/} elements of $X$. That is,
\begin{multline*}
   X_{reg}:=
   \{ x\in X\mid \dim G{\cdot}x \ge \dim G{\cdot}x' \text{ for all } x' \in X\}= \\
    \{ x\in X\mid \dim \g_x \le \dim \g_{x'} \text{ for all } x' \in X\} \ .
\end{multline*}
As is well-known, $X_{reg}$ is a dense open subset of $X$. If we want to explicitly specify 
the group acting on $X$, we refer to $G$-{\it regular\/} elements.
\begin{df}   \label{def:codim}
A $G$-variety $X$ is said to have the {\it codim--$n$ property\/} if
$\codim_X (X\setminus X_{reg})\ge n$.
\end{df}

\noindent
We will mostly use this notion if $X=V$ is a $G$-module.

\noindent
{\bf Example.} Let  $\g$ be reductive and $\N\subset\g$ the nilpotent cone.
Then $\g$ (resp. $\N$) has the codim--$3$ (resp. codim--$2$) property
with respect to the adjoint representation \cite{ko63}.

Recall that the {\it index\/} of $\g$, denoted $\ind\g$, 
is the minimal dimension of stabilisers for the elements
of the $\g$-module $\g^*$.
That is, $\ind\g=\displaystyle \min_{\xi\in\g^*} \dim \g_\xi= \dim \g_\eta$
for any $\eta\in \g^*_{reg}$.

\subsection{}
For finite-dimensional $\bbk$-vector spaces $V$ and $N$, let 
$\Mor(V,N)$ denote the set of 
{\sl polynomial\/} morphisms $V\to N$. Clearly, $\Mor(V,N)\simeq \bbk[V]\otimes N$
and it is a free graded $\bbk[V]$-module of rank $\dim N$.

If $V$ and $N$ are $G$-modules, then $G$ acts on $\Mor(V,N)$ by the rule
$(g\ast F)(v)=g{\cdot}(F(g^{-1}{\cdot}v))$. Then  $(\Mor(V,N))^G=:\Mor_G(V,N)$
is the set of all polynomial
$G$-equivariant morphisms $V\to N$. It is a $\bbk[V]^G$-module, which is called
{\it the module of covariants of type $N$}.
If $G$ is reductive, then the algebra $\bbk[V]^G$ is
finitely generated and each $\Mor_G(V,N)$
is a finitely generated $\bbk[V]^G$-module.

[All these constructions makes sense if $V$ is replaced with any affine $G$-variety $X$.]

\subsection{}
Let $\ml D(V)$ denote the algebra of differential operators on $V$, with polynomial coefficients.
Recall that $\ml D(V)$ contains the symmetric algebra of $V$,\/ $\cs(V)$, as the subalgebra of constant coefficient differential operators and $\bbk[V]$ as the subalgebra of differential operators of order zero.
We always filter $\ml D(V)$ by the order of differential operators, hence
$\mathrm{gr}_n\ml D(V)\simeq \bbk[V]\otimes \cs^n(V)$ and 
$\gr\ml D(V)$ is isomorphic to $\bbk[V]\otimes \cs(V)=\bbk[V\times V^*]$ as algebras.

Let $\mathsf{Der}(\bbk[V])$ denotes the $\bbk[V]$-module of all $\bbk$-derivations 
of $\bbk[V]$ or, equivalently, the module of polynomial vector fields on $V$. 
Then $\mathsf{Der}(\bbk[V])\simeq \Mor(V,V)$. 
A vector field $\X$ can be regarded either as polynomial endomorphism of
$V$  or as  linear endomorphism of $\bbk[V]$. The respective notation is $\X(v)$, $v\in V$
and $\X\{f\}$, $f\in\bbk[V]$.

\section{Adjoint vector fields and 3-wonderful Lie algebras}
\label{sect:kost}

\noindent
In this section, $G$ is a connected algebraic group.

Let $V$ be a (finite-dimensional, rational) $G$-module.
The differential of the $G$-action on $\bbk[V]$ yields a map 
$\boldsymbol{\varsigma}=\boldsymbol{\varsigma}_V:\g\to \mathsf{Der}(\bbk[V]) \subset \ml D(V)$. 
Upon the identification $\mathsf{Der}(\bbk[V])$ with $\Mor(V,V)$, we see 
that $\boldsymbol{\varsigma}(e)$ is just the linear operator on $V$ corresponding to $e\in\g$.
The vector fields on $V$ of the form $\boldsymbol{\varsigma}(e)$ are said to be  the {\it adjoint vector fields}.
For  $\g$ semisimple and $V=\g$,
Dixmier describes a relationship between the adjoint vector fields and vector
fields annihilating all of $\bbk[\g]^G$ \cite[Theorem\,2.1]{dixm}. 
Below, we prove that this result naturally extends to the coadjoint representations of certain 
non-reductive Lie algebras.

\noindent
In \cite{ko63}, Kostant  established a number of fundamental properties of complex reductive Lie algebras. Motivated by these results, we give the following 

\begin{df}    \label{def:kost}
An algebraic Lie algebra $\g$ is said to be $n$-{\it wonderful}, if the following conditions are satisfied:
\begin{itemize}
\item[\sf (i)] \ the coadjoint representation of $\g$ has the codim--$n$ property.
\item[\sf (ii)] \  $\bbk[\g^*]^G$ is a polynomial algebra of Krull dimension $l=\ind\g$;
\item[\sf (iii)] \ If $f_1,\ldots,f_l$ are homogeneous algebraically independent generators
of $\bbk[\g^*]^G$, then $\sum_{i=1}^l \deg f_i=(\dim\g+\ind\g)/2$;
\end{itemize}
\end{df}

\begin{rmk}   \label{rmk:3wond-conn}
We are only interested in $n$-wonderful algebras for $n=2,3$.
Let us point out  some connections between hypotheses of this definition, and  their  consequences.

1. For any Lie algebra, $\trdeg\bbk(\g^*)^G$ equals 
$\ind\g$ and hence $\bbk[\g^*]^G$ contains at most $\ind\g$ algebraically independent
elements.
Thus, condition (ii) also means that $(\g,\ads)$ has sufficiently many 
polynomial invariants.

2. If $(\g,\ads)$ has the codim--$2$ property and 
$f_1,\ldots,f_l\in \bbk[\g^*]^G$ are algebraically independent, then
$\sum_{i=1}^l \deg f_i\ge (\dim\g+\ind\g)/2$. Furthermore, if
the equality holds, then  $f_1,\ldots,f_l$ freely generate
$\bbk[\g^*]^G$  and 
\begin{equation}   \label{ravno}
    \g^*_{reg}=\{\xi\in\g^* \mid (\textsl{d}f_1)_\xi, \ldots,(\textsl{d}f_l)_\xi 
  \ \text{ are linearly independent}  \},
\end{equation}
see \cite[Theorem\,1.2]{coadj}. It follows that Eq.~\eqref{ravno} holds for any
$2$-wonderful algebra. 
For $\g$ reductive,  equality \eqref{ravno} is a celebrated result of Kostant
\cite[Theorem\,0.1]{ko63}.

3. The main result of \cite{codim3} asserts that if $\g$ is $3$-wonderful, then 
the Poisson commutative subalgebra of $\bbk[\g^*]$ obtained from $\bbk[\g^*]^G$
via the argument shift method is {\it maximal} for any $\xi\in\g^*_{reg}$.

4. Any reductive Lie algebra is $3$-wonderful.
Several non-trivial examples of $3$-wonderful algebras are discussed in 
\cite[Section\,4]{codim3}.
\end{rmk}

\begin{thm}   \label{ala}
Let $\g$ be a $3$-wonderful Lie algebra. Given a polynomial vector field $\X$ on $\g^*$, the
following conditions are equivalent:
\begin{itemize}
\item[\sf (i)] \  $\X$ annihilates all $G$-invariant polynomials on $\g^*$;
\item[\sf (ii)] \ $\X(\xi)\in \g{\cdot}\xi$ for any $\xi\in \g^*_{reg}$;
\item[\sf (iii)] \ $\X(\xi)\in \g{\cdot}\xi$ for any $\xi\in \g^*$;
\item[\sf (iv)] \ There is a polynomial mapping $\Y\in\Mor (\g^*,\g)$ such that
$\X(\xi)=\Y(\xi){\cdot}\xi$ for any $\xi\in \g^*$.
\end{itemize}
\end{thm}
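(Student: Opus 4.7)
The implications (iv) $\Rightarrow$ (iii) $\Rightarrow$ (ii) are immediate. For (iv) $\Rightarrow$ (i), if $f\in\bbk[\g^*]^G$, then $\Y(\xi){\cdot}\xi$ is tangent to the orbit $G{\cdot}\xi$ at $\xi$, so $\X\{f\}(\xi)=\langle (df)_\xi, \Y(\xi){\cdot}\xi\rangle=0$. Thus the content of the theorem lies in (i) $\Rightarrow$ (ii) and (ii) $\Rightarrow$ (iv).

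For (i) $\Rightarrow$ (ii), let $f_1,\ldots,f_l$ be free homogeneous generators of $\bbk[\g^*]^G$ (with $l=\ind\g$). Formula~\eqref{ravno} (applicable because a $3$-wonderful algebra is in particular $2$-wonderful) ensures that $(df_1)_\xi,\ldots,(df_l)_\xi$ are linearly independent for every $\xi\in\g^*_{reg}$. Hence $\bigcap_i\Ker(df_i)_\xi$ has dimension $\dim\g-l$, which is exactly $\dim(\g{\cdot}\xi)$ at a regular point. The inclusion $\g{\cdot}\xi\subseteq\bigcap_i\Ker(df_i)_\xi$ always holds, so the two subspaces coincide. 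Hypothesis (i) gives $\langle(df_i)_\xi,\X(\xi)\rangle=0$ for each $i$, forcing $\X(\xi)\in\g{\cdot}\xi$.

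For (ii) $\Rightarrow$ (iv), consider the bundle morphism $\phi:\g^*\times\g\to\g^*\times\g^*$, $(\xi,e)\mapsto(\xi,e{\cdot}\xi)$, covering the identity on $\g^*$. Restricted to $\g^*_{reg}$, $\phi$ has constant rank $\dim\g-l$, producing a short exact sequence of vector bundles
\[
0\longrightarrow\ck\longrightarrow\g^*_{reg}\times\g\longrightarrow\ml{I}\longrightarrow 0,\qquad \ck_\xi=\g_\xi,\ \ \ml{I}_\xi=\g{\cdot}\xi.
\]
Identifying $T_\xi^*\g^*\cong\g$, the $G$-invariance of each $f_i$ implies $(df_i)_\xi\in\g_\xi$; by~\eqref{ravno}, the polynomial sections $\xi\mapsto(df_i)_\xi$ furnish a trivialization $\ck\cong\g^*_{reg}\times\bbk^l$. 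By (ii), $\X|_{\g^*_{reg}}$ is a global section of $\ml{I}$, and the obstruction to lifting it to a morphism $\Y_{reg}:\g^*_{reg}\to\g$ lies in $H^1(\g^*_{reg},\ck)\cong H^1(\g^*_{reg},\co)^{l}$.

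Because $\g^*$ is smooth and affine and $\codim(\g^*\setminus\g^*_{reg})\ge 3$, the standard local cohomology vanishing gives $H^2_{\g^*\setminus\g^*_{reg}}(\g^*,\co)=0$, whence $H^1(\g^*_{reg},\co)=0$; thus a lift $\Y_{reg}$ exists. Since $\g^*$ is normal and the complement of $\g^*_{reg}$ has codimension $\ge 2$, Hartogs' extension carries $\Y_{reg}$ to a polynomial map $\Y:\g^*\to\g$, and the identity $\Y(\xi){\cdot}\xi=\X(\xi)$ holds on the dense set $\g^*_{reg}$ and hence everywhere. The principal obstacle is this cohomological lift: triviality of $\ck$, supplied by the differentials of the invariants, is exactly what reduces global liftability to a vanishing of $H^1$ of the structure sheaf on the quasi-affine variety $\g^*_{reg}$, and the codim--$3$ hypothesis is invoked in an essential way.
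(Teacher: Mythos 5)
Your proof is correct and follows essentially the same route as the paper: the same short exact sequence of bundles over $\g^*_{reg}$ with kernel trivialized by the differentials $(\textsl{d}f_i)_\xi$ via Eq.~\eqref{ravno}, the same vanishing $H^1(\g^*_{reg},\co)=0$ deduced from affineness and the codim--$3$ hypothesis through local cohomology, and the same Hartogs-type extension across a codimension-$\ge 2$ complement. The only (immaterial) difference is the arrangement of the easy implications: you close the cycle via (iv)$\Rightarrow$(i) and (i)$\Rightarrow$(ii), while the paper chains (iv)$\Rightarrow$(iii)$\Rightarrow$(ii)$\Rightarrow$(i) and proves (i)$\Rightarrow$(iv).
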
\begin{proof}
Recall that for $f\in \bbk[\g^*]$ the polynomial $\X\{f\}\in \bbk[\g^*]$ is defined by
\begin{equation}   \label{vf}
   \X\{f\}(\xi)=\langle \X(\xi), (\textsl{d}f)_\xi \rangle .
\end{equation}
It is therefore clear that (iv)$\Rightarrow$(iii)$\Rightarrow$(ii)$\Rightarrow$(i).
It remains to prove the implication (i)$\Rightarrow$(iv). 
To this end, we need some preparations. Up to some obvious alterations, the rest of the 
proof is a repetition of the proof of Theorem~2.1 in \cite{dixm}.

Set $\Omega=\g^*_{reg}$. If $\xi\in\Omega$, then
$(\textsl{d}f_1)_\xi, \ldots,(\textsl{d}f_l)_\xi$ form a  basis for $\g_\xi$, in view of
Definition~\ref{def:kost} and Eq.~\eqref{ravno}.

Let $E$ be the cotangent bundle of $\Omega$, which is identified with  
$E\simeq \Omega\times \g$.
Let $E'$ be the sub-bundle of $E$ whose fibre of $\xi$ is $\g_\xi$.
The previous paragraph shows that the $\textsl{d}f_i$'s yield a trivialisation of 
$E'$.
Let $E''$ be the sub-bundle of the tangent bundle of $\Omega$ whose fibre of
$\xi$ is $\g{\cdot}\xi$. Since the kernel of the surjective mapping
$(x\in \g) \mapsto (x{\cdot}\xi\in \g{\cdot}\xi)$ is $\g_\xi$, 
one obtains the exact sequence of vector bundles
\[
    0\to E'\to E\to E''\to 0 \ 
\] 
and the exact sequence 
\[
   H^0(\Omega, E)\to H^0(\Omega, E'')\to H^1(\Omega, E') \ .
\]
Let $\eus O$ denote the structure sheaf of $\g^*$. By \cite[cor.\,2.9, p.16]{SGA}, 
there exists an exact
sequence of cohomology groups
\[
   H^1(\g^*,\eus O)\to H^1(\Omega,\eus O\vert_\Omega)\to H^2_{\g^*\setminus\Omega}(\g^*,
   \eus O) \ .
\]
Here $H^1(\g^*,\eus O)=0$ because $\g^*$ is affine, and it follows from the codim--$3$ 
property that $H^2_{\g^*\setminus\Omega}(\g^*, \eus O)=0$ \cite[cor.\,1.4, p.80]{SGA}.
Hence $H^1(\Omega,\eus O\vert_\Omega)=0$. This fact and the trivilality of $E'$
imply that $H^1(\Omega, E')=0$. Thus, the homomorphism 
$\gamma: H^0(\Omega, E)\to H^0(\Omega, E'')$ is onto.

Suppose that $\X$ satisfies assumption (i). Then Eq.~\eqref{vf}
and the linear independence of the differentials $(\textsl{d}f_i)_\xi$, $\xi\in\Omega$,  
show that $\X$ also satisfies (ii).
Therefore $\X\vert_\Omega$ is a section of $E''$.
The  surjectivity of $\gamma$ means that there exists a polynomial mapping
$\Y_0:\Omega\to \g$ such that $\X(\xi)=\Y_0(\xi){\cdot}\xi$ for any
$\xi\in\Omega$. Since $\codim(\g^*\setminus\Omega)\ge 2$, 
$\Y_0$ extends to a polynomial mapping $\Y:\Omega\to \g$, and 
the equality $\X(\xi)=\Y(\xi){\cdot}\xi$ holds  for all $\xi\in\g^*$.
\end{proof}

\begin{rmk}
This theorem is a statement about the coadjoint representation of $G$. There are two key
points in the proof. First, $E'$ appears to be a trivial bundle and, second, 
$\codim (\g^*\setminus \Omega)\ge 3$. Using this observation, we show in 
Section~\ref{sect:mod} that Theorem~\ref{ala} admits various generalisations to
other representations of $G$.
\end{rmk}

\begin{rmk}    \label{istolkov}
We know that $\mathsf{Der}(\bbk[\g^*])$ is a $\bbk[\g^*]$-module
and  $\boldsymbol{\varsigma}(\g)\subset \mathsf{Der}(\bbk[\g^*])$, where $\boldsymbol{\varsigma}=\boldsymbol{\varsigma}_{\g^*}$.
Therefore,  implication (i)$\Rightarrow $(iv) in Theorem~\ref{ala}
can be stated as follows:

{\sl If\/ $\X\in \mathsf{Der}(\bbk[\g^*])$ annihilates all of\/ $\bbk[\g^*]^G$, then 
$\X\in \bbk[\g^*]{\cdot}\boldsymbol{\varsigma}(\g)$.}
\end{rmk}

\begin{rmk}    \label{essence}
For an arbitrary Lie algebra $\g$, define
the homomorphism of $\bbk[\g^*]$-modules
$\hat\phi: \Mor(\g^*,\g)\to \Mor(\g^*,\g^*)$  by
$\hat\phi(F)(\xi)=F(\xi){\cdot}\xi$, $\xi\in\g^*$. Then 
\[
\Ima\hat\phi\subset \{\eus F: \g^*\to \g^* \mid  \eus F(\xi)\in \g{\cdot}\xi \quad \forall\xi\in\g^* \}
=:\eus T \ .
\]
Since the elements of $\Mor(\g^*,\g^*)$ is are just the vector field on $\g^*$,
the equivalence of conditions (iii) and (iv) in Theorem~\ref{ala} reduces to
the assertion that if $\g$ is $3$-wonderful, then $\Ima\hat\phi=\eus T$.  

Notice that $\Ker\hat\phi=\{F:\g^*\to \g\mid F(\xi)\in \g_\xi \quad \forall\xi\in\g^* \}$.
If $\Ker\hat\phi$ is a {\it free\/} $\bbk[\g^*]$-module (of rank $l=\ind\g$) and
$F_1,\ldots,F_l$ is a basis, then 
$E'$ is a trivial vector bundle over $\Omega'=\{\xi\in \g^*_{reg}\mid F_1(\xi),\dots, F_l(\xi)
\ \text{ are linearly independent}\}$. 
For any $f\in\bbk[\g^*]^G$, we have $\textsl{d}f\in \Ker\hat\phi\cap \Mor_G(\g^*,\g)$.
If $\g$ is $2$-wonderful, then  \cite[Theorem\,1.9]{jac} applies to the coadjoint
representation of $\g$, and one concludes that $\Ker\hat\phi$ is freely generated
by  the differentials $\textsl{d}f_1,\ldots,\textsl{d}f_l$.
Then, using Eq.~\eqref{ravno}, we obtain $\Omega'=\g^*_{reg}$.
This argument shows that in some cases (actually, most interesting ones), the triviality of $E'$
is closely related to the fact that $\Ker\hat\phi$ is a free $\bbk[\g^*]$-module 
generated by $G$-equivariant morphisms. 
\end{rmk}

\begin{ex}     \label{ex:new_wonder}
There is a procedure that generates new $n$-wonderful algebras from old ones (for $n\ge 2$).
Let $\q$ be a quadratic $n$-wonderful Lie algebra
("quadratic" means that $\q^*\simeq \q$ as $\q$-module) . 
Form the semi-direct product $\g=\q\ltimes\q$
(the second copy of $\q$ is an Abelian ideal of $\g$).  
Then $\g$ is again quadratic and $n$-wonderful. 
That $\g$ to be quadratic is elementary.  Therefore we can deal with the adjoint
representation of $\g$.
It then suffices to apply Theorem~7.1 in \cite{p05} to the case $V=\q$.
Roughly speaking, that theorem says that
the passage $\q \mapsto \q\ltimes\q$ doubles all  data
occurring in Definition~\ref{def:kost}. That is, $\dim\g=2\dim\q$, $\ind\g=2\,\ind\q$;
each basis invariant $f_i\in \bbk[\q]^Q$ gives rise to two basis invariants
in $\bbk[\g]^G$, and the degree for all three is the same. 
Finally, it is easily seen that $\q_{reg}\ltimes\q\subset \g_{reg}$.
Hence the codim--$n$ property is also preserved.

In particular, one can start with any semisimple $\es$ and take
$\es\ltimes\es$. This yields interesting examples of $3$-wonderful algebras. 
Notice that then this procedure can be iterated ad infinum. 
\end{ex}

\section{Modules over polynomial rings  associated with representations}
\label{sect:mod}

\noindent Unless otherwise stated, $G$ is an arbitrary connected algebraic group.
Let $V$ be a $G$-module. 
Associated with $V$, $\g$, and $\g^*$, there are at least three natural exact sequences of
modules over polynomial rings:

\begin{itemize}
\item[(A)] \qquad  $0\to\Ker\hat\phi\to \Mor(V,\g)\stackrel{\hat\phi}{\to} \Mor(V,V)$,
\item[(B)] \qquad  $0\to\Ker\hat\psi\to \Mor(V,V^*)\stackrel{\hat\psi}{\to} \Mor(V,\g^*)$,
\item[(C)] \qquad  $0\to\Ker\hat\tau\to \Mor(\g,V)\stackrel{\hat\tau}{\to} \Mor(\g,V)$.
\end{itemize}

\noindent
The first two sequences consist of $\bbk[V]$-modules, and the last one consists of 
$\bbk[\g]$-modules. Some of the properties of (A) and (B) 
have been studied in \cite{jac}, whereas  (B) and (C) have also been considered in
\cite[Sect.\,8]{p05}. 
Recall the definitions of $\hat\phi,\hat\psi,{\hat\tau}$:

(A) \quad  ${\hat\phi}(F)(v):=F(v){\cdot}v$, where $v\in V$;

(B) \quad $\langle \hat\psi(F)(v), x\rangle:=\langle x{\cdot}v, F(v)\rangle$, where $v\in V$,
$x\in\g$, 
and $\langle\ ,\ \rangle$ stands for  the pairing of elements of dual vector spaces. 
One can also exploit the {\it moment mapping\/} $\mu: V\times V^* \to \g^*$, 
which is defined by
$\langle \mu(v,\eta), x\rangle=\langle x{\cdot}v,\eta\rangle$, where $\eta\in V^*$.
Then
$\hat\psi(F)(v):=\mu(v,F(v))$. 

(C) \quad ${\hat\tau}(F)(x):=x{\cdot}F(x)$, where $x\in \g$.

\begin{rem}
$\Ker\hat\phi$ is a $G$-stable submodule  of $\Mor(V,\g)$; and likewise for 
$\Ker\hat\psi$ and $\Ker\hat\tau$.
\end{rem}
\noindent
Note that, for $V=\g^*$, the sequences (A) and (B) coincide,  and we obtain the situation 
of Remark~\ref{essence}. Also, the sequences (A) and (C) coincide if $V=\g$.
Below we formulate Dixmier-type statements, which characterise the images of
$\hat\phi,\hat\psi$, and ${\hat\tau}$ under similar (rather restrictive) assumptions.

\noindent {\bf Case (A)}. 
Here  $\Ker\hat\phi=\{F:V\to \g\mid F(v)\in\g_v\ \ \forall v\in V \}$ and
$\Ima\hat\phi\subset \{\eus F:V\to V\mid \eus F(v)\in \g{\cdot}v \ \ \forall v\in V \}$.
Set $\Omega_\phi=V_{reg}$. Consider three vector bundles on $\Omega_\phi$:
\begin{gather*}
E'_\phi=\{(v,x)\mid x{\cdot}v=0\}=\{(v,x)) \mid x\in \g_v\}
\subset \Omega_\phi\times \g,  \\
E_\phi= \Omega_\phi\times \g, \quad
E''_\phi=\{(v,x{\cdot}v)\mid v\in\Omega_\phi,\ x\in\g\} \subset \Omega_\phi\times V
\end{gather*}
and the corresponding exact sequence $0\to E'_\phi\to E_\phi\to E''_\phi\to 0$. 
Arguing as in the proof of Theorem~\ref{ala}, one obtains

\begin{prop}   \label{prop-a}
Suppose $E'_\phi$ is a trivial vector bundle and $\codim(V\setminus\Omega_\phi)\ge 3$. Then
\[
\Ima\hat\phi= \{\eus F:V\to V\mid \eus F(v)\in \g{\cdot}v\ \ \forall v\in V \} .
\]
In other words, if $\eus F(v)\in \g{\cdot}v$ for all $v\in\Omega_\phi$, then there is
$F:V\to\g$ such that $\eus F(v)=F(v){\cdot}v$ for all $v\in V$.
\end{prop}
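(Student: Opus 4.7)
The plan is to follow the proof of Theorem~\ref{ala} almost verbatim, with $V$ playing the role of $\g^*$ and with the bundle data $E'_\phi, E_\phi, E''_\phi$ on $\Omega_\phi=V_{reg}$ replacing their Kostant-type counterparts. First I would write down the long exact cohomology sequence
\[
H^0(\Omega_\phi, E_\phi)\to H^0(\Omega_\phi, E''_\phi)\to H^1(\Omega_\phi, E'_\phi)
\]
associated with the short exact sequence $0\to E'_\phi\to E_\phi\to E''_\phi\to 0$. The goal is to show that the connecting term vanishes, $H^1(\Omega_\phi, E'_\phi)=0$, so that every global section of $E''_\phi$ lifts to a section of the trivial bundle $E_\phi=\Omega_\phi\times\g$, i.e., to a polynomial morphism $\Omega_\phi\to \g$.

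The assumed triviality of $E'_\phi$ reduces the required vanishing to $H^1(\Omega_\phi,\eus O|_{\Omega_\phi})=0$. Here both hypotheses enter together: from the local-cohomology exact sequence
\[
H^1(V,\eus O)\to H^1(\Omega_\phi,\eus O|_{\Omega_\phi})\to H^2_{V\setminus \Omega_\phi}(V,\eus O),
\]
the first term vanishes because $V$ is affine, while the third vanishes by \cite[cor.\,1.4, p.80]{SGA} thanks to $\codim(V\setminus\Omega_\phi)\ge 3$, exactly as in the proof of Theorem~\ref{ala}.

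With surjectivity of $H^0(\Omega_\phi, E_\phi)\to H^0(\Omega_\phi, E''_\phi)$ secured, I would take any polynomial $\eus F:V\to V$ with $\eus F(v)\in \g{\cdot}v$ for every $v\in\Omega_\phi$. Its restriction to $\Omega_\phi$ is a section of $E''_\phi$, which lifts to a morphism $F_0:\Omega_\phi\to\g$ satisfying $\eus F(v)=F_0(v){\cdot}v$ on $\Omega_\phi$. Since $V$ is smooth and $\codim(V\setminus\Omega_\phi)\ge 3\ge 2$, the Hartogs-type extension for regular functions on a normal variety applies coordinate-wise to $F_0$ and yields a polynomial $F:V\to\g$; the equality $\eus F(v)=F(v){\cdot}v$ then propagates from the dense open $\Omega_\phi$ to all of $V$ by continuity of polynomial maps. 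The reverse inclusion $\Ima\hat\phi\subset \{\eus F\mid\eus F(v)\in\g{\cdot}v\ \forall v\in V\}$ is immediate from the definition of $\hat\phi$.

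The main obstacle I anticipate is essentially bookkeeping rather than substance: one must verify that the cohomological machinery used in the coadjoint setting transfers without change to a general $G$-module $V$. Because $V$ and $\g$ are merely vector spaces and the short exact sequence of bundles on $\Omega_\phi$ is formally identical to the one in Theorem~\ref{ala}, no genuinely new ingredient is needed; the codim--$3$ hypothesis supplies the vanishing of $H^1(\Omega_\phi,\eus O|_{\Omega_\phi})$, whereas the triviality of $E'_\phi$ (which was a byproduct of the $3$-wonderful structure in Theorem~\ref{ala}) is now imposed as an axiom. Once these two inputs are in place, everything else is transcription of the earlier argument.
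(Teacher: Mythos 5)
Your proposal is correct and follows essentially the same route as the paper, which proves Proposition~\ref{prop-a} precisely by transcribing the proof of Theorem~\ref{ala}: the cohomology sequence of $0\to E'_\phi\to E_\phi\to E''_\phi\to 0$, vanishing of $H^1(\Omega_\phi,\eus O\vert_{\Omega_\phi})$ via the local-cohomology sequence and the codim--$3$ hypothesis, lifting of the section of $E''_\phi$, and Hartogs-type extension across a complement of codimension $\ge 2$. No discrepancies to report.
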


\noindent 
This is not a complete analogue of Theorem~\ref{ala}, since we obtain only equivalence of
the following three conditions on the vector field $\eus F:V\to V$:

\begin{itemize}
\item[\sf (ii)] \ $\eus F(v)\in \g{\cdot}v$ for any $v\in V_{reg}$;

\item[\sf  (iii)] \ $\eus F(v)\in \g{\cdot}v$ for any $v\in V$;
 
\item[\sf (iv)]  There is an $F\in \Mor(V,\g)$ such that
$\eus F(v)=F(v){\cdot}\xi$ for any $\xi\in V$.
\end{itemize}
In order to add condition 
\begin{itemize}
\item[\sf (i)] \  $\eus F$ annihilates all of $\bbk[V]^G$
\end{itemize}
to  this list, one has to impose some constraints on $\bbk[V]^G$. For instance, it suffices to require that the quotient field of $\bbk[V]^G$ equals to $\bbk(V)^G$ and that
$\dim (\mathsf{span}\{\textsl{d}f_v \mid f\in \bbk[V]^G\})=\trdeg \bbk(V)^G$ for any 
$v\in V_{reg}$. (Cf. the proof of Theorem~\ref{ala}).
Actually, these two conditions are not too restrictive. These are always satisfied if
$G$ is semisimple and $\bbk[V]^G$ is a polynomial (free) algebra (see \cite{knop}).

\noindent The problem  of triviality for $E'_\phi$ is connected with the
question of whether $\Ker\hat\phi$ is a free $\bbk[V]$-module. This seems to be related 
to the property that a generic stabiliser for $(\g:V)$ is abelian. 
In the following sections, we study case (A) more carefully, prove a more general result,
and provide some examples.

\noindent {\bf Case (B)}.
Here  $\Ker\hat\psi=\{F:V\to V^*\mid \mu(v, F(v))=0\ \ \forall v\in V \}$ and
$\Ima\hat\psi\subset \{\eus F:V\to \g^*\mid \eus F(v)\in \mu(v,V^*)\ \ \forall v\in V \}$.
Again, we take $\Omega_\psi=V_{reg}$. Consider three vector bundles on $\Omega_\psi$:
\begin{gather*}
E'_\psi=\{(v,\xi)\mid \mu(v,\xi)=0\}=\{(v,\xi)\mid \xi\in (\g{\cdot}v)^\perp\} 
\subset\Omega_\psi\times V^*,  \\
E_\psi= \Omega_\psi\times V^*, \quad
E''_\psi=\{(v,\mu(v,\xi))\mid v\in\Omega_\psi,\ \xi\in V^*\} \subset \Omega_\psi\times \g^*
\end{gather*}
and the corresponding exact sequence $0\to E'_\psi\to E_\psi\to E''_\psi\to 0$. 
Arguing as in the proof
of Theorem~\ref{ala}, one obtains

\begin{prop}   \label{prop-b}
Suppose $E'_\psi$ is a trivial vector bundle and $\codim(V\setminus\Omega_\psi)\ge 3$. Then
\[
\Ima\hat\psi= \{\eus F:V\to \g^*\mid \eus F(v)\in \mu(v,V^*)\ \ \forall v\in V \} .
\]
In other words, if $\eus F(v)\in \mu(v,V^*)$ for all $v\in\Omega_\psi$, then there is
$F:V\to V^*$ such that $\eus F(v)=\mu(v, F(v))$ for all $v\in V$.
\end{prop}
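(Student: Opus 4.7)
The plan is to run the cohomological argument from the proof of Theorem~\ref{ala} essentially verbatim, but with the exact sequence $0\to E'_\psi\to E_\psi\to E''_\psi\to 0$ on $\Omega_\psi=V_{reg}$ in place of the one built from $\g_\xi$ and $\g{\cdot}\xi$. The containment $\Ima\hat\psi\subset\{\eus F\mid \eus F(v)\in\mu(v,V^*)\ \forall v\in V\}$ is tautological from the definition of $\hat\psi$, so only the reverse containment (equivalently, the second, more explicit formulation) requires proof.

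First I would verify that the three pieces of the sequence genuinely form vector bundles over $\Omega_\psi$. The decisive observation is that for $v\in V_{reg}$ the dimension of the orbit $\g{\cdot}v$ -- and hence of its annihilator $(\g{\cdot}v)^\perp\subset V^*$, which is exactly the fibre of $E'_\psi$ over $v$ -- is constant. Consequently the fibrewise linear map $(v,\eta)\mapsto(v,\mu(v,\eta))$ has constant rank on $\Omega_\psi$, and $0\to E'_\psi\to E_\psi\to E''_\psi\to 0$ is an honest short exact sequence of vector bundles. Passing to sheaf cohomology yields
$$H^0(\Omega_\psi,E_\psi)\to H^0(\Omega_\psi,E''_\psi)\to H^1(\Omega_\psi,E'_\psi).$$
The same local-cohomology input as in Theorem~\ref{ala} (via \cite[cor.\,2.9, p.16]{SGA} and \cite[cor.\,1.4, p.80]{SGA}) exploits the affineness of $V$ and the codim--$3$ hypothesis on $V\setminus\Omega_\psi$ to give $H^1(\Omega_\psi,\eus O|_{\Omega_\psi})=0$; combined with the assumed triviality of $E'_\psi$ this forces $H^1(\Omega_\psi,E'_\psi)=0$, so the map on $H^0$ is surjective.

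Finally, given a polynomial $\eus F:V\to\g^*$ with $\eus F(v)\in\mu(v,V^*)$ for every $v\in\Omega_\psi$, its restriction $\eus F|_{\Omega_\psi}$ is a regular section of $E''_\psi$; surjectivity on $H^0$ then produces a polynomial lift $F_0:\Omega_\psi\to V^*$ satisfying $\mu(v,F_0(v))=\eus F(v)$ on $\Omega_\psi$. Since codim--$3$ implies codim--$2$, $F_0$ extends uniquely to a polynomial morphism $F:V\to V^*$, and density of $\Omega_\psi$ in $V$ propagates the identity $\eus F(v)=\mu(v,F(v))$ to all of $V$. The only point at which the argument departs from that of Theorem~\ref{ala} is the bundle-theoretic check for $E''_\psi$, which now rests on the constant dimension of $(\g{\cdot}v)^\perp$ rather than of $\g_\xi$; I expect this minor variation to be essentially the only obstacle, as everything else transcribes formally.
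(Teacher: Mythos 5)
Your argument is correct and is exactly the route the paper takes: the paper's proof of this proposition is literally ``arguing as in the proof of Theorem~\ref{ala}'', i.e.\ the same short exact sequence of bundles on $\Omega_\psi=V_{reg}$, vanishing of $H^1(\Omega_\psi,E'_\psi)$ via affineness, the codim--$3$ hypothesis and triviality of $E'_\psi$, surjectivity on $H^0$, and extension of the lift across a set of codimension $\ge 2$. Your extra check that constancy of $\dim(\g{\cdot}v)^\perp$ on $V_{reg}$ makes $E'_\psi$ and $E''_\psi$ genuine sub-bundles is a sound (and implicitly assumed) detail, not a deviation.
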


The hypotheses of Proposition~\ref{prop-b} are  satisfied in the following situation.

\begin{thm} \label{thm-b}
Suppose $\g$ is semisimple, $\bbk[V]^G$ is polynomial, and $\codim (V\setminus V_{reg})
\ge 3$. Then $\Ker\hat\psi$ is a free $\bbk[V]$-module and 
Proposition~\ref{prop-b} applies.
\end{thm}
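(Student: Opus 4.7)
The plan is to combine an identification of $\Ker\hat\psi$ with the $\bbk[V]$-module generated by the differentials $\textsl{d}f_i$ of the free generators of $\bbk[V]^G$, together with the observation that these same differentials trivialise the vector bundle $E'_\psi$ over $V_{reg}$; Proposition~\ref{prop-b} will then deliver the conclusion.

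Let $f_1,\dots,f_l \in \bbk[V]^G$ be homogeneous algebraically independent generators, so $l = \trdeg \bbk(V)^G$. For every $G$-invariant $f$ and every $x\in\g$, $\langle x\cdot v,(\textsl{d}f)_v\rangle = (x\cdot f)(v) = 0$, i.e.\ $\mu(v,(\textsl{d}f)_v)=0$; hence each $\textsl{d}f_i$ lies in $\Ker\hat\psi$. First I would show that the $(\textsl{d}f_i)_v$ trivialise the fibre $(\g\cdot v)^\perp$ of $E'_\psi$ at every $v\in V_{reg}$. Since $G$ is semisimple, Rosenlicht's theorem gives $\dim(\g\cdot v)^\perp = \dim V - \dim\g + \dim\g_v = l$ for $v$ regular. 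Since $G$ is semisimple and $\bbk[V]^G$ is polynomial, Knop's results (\cite{knop}), as invoked in the discussion following Proposition~\ref{prop-a}, yield the Kostant-type identity $V_{reg} = \{v\in V \mid (\textsl{d}f_1)_v,\dots,(\textsl{d}f_l)_v \text{ are linearly independent}\}$. The $l$ independent functionals $(\textsl{d}f_i)_v$ therefore form a basis of the $l$-dimensional space $(\g\cdot v)^\perp$ at each $v\in V_{reg}$, furnishing a global algebraic frame and hence the triviality of $E'_\psi$ over $\Omega_\psi$.

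Next, to upgrade this to freeness of $\Ker\hat\psi$ as a $\bbk[V]$-module, I would apply \cite[Theorem~1.9]{jac} in the spirit of Remark~\ref{essence}, but for the action $(G:V)$ rather than the coadjoint action. The codim-$3$ hypothesis implies the codim-$2$ property, which allows one to extend a basis of $\Ker\hat\psi$ defined over $V_{reg}$ to one over the whole of $V$; combined with the pointwise linear independence above, this identifies $\textsl{d}f_1,\dots,\textsl{d}f_l$ as a free $\bbk[V]$-basis of $\Ker\hat\psi$. Since the codim-$3$ hypothesis is given outright, Proposition~\ref{prop-b} applies verbatim and finishes the proof.

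The main obstacle is verifying that the Jacobian-type machinery of \cite[Theorem~1.9]{jac}, originally stated for the coadjoint representation of a $2$-wonderful algebra, carries over to a general $G$-module $V$ satisfying our three hypotheses. The translation should go through by replacing the Kostant description \eqref{ravno} with its analogue supplied by Knop for polynomial invariants of semisimple groups, and by invoking Rosenlicht's dimension count in place of the computation of $\ind\g$; nevertheless this is the only step in the argument that is not purely formal, and careful bookkeeping there is essential.
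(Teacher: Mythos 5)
Your proposal is correct and takes essentially the same route as the paper: the differentials $\textsl{d}f_1,\dots,\textsl{d}f_l$ of the basic invariants freely generate $\Ker\hat\psi$, Knop's Korollar~1 gives their pointwise linear independence on $V_{reg}$ (together with the equality of $\operatorname{Quot}(\bbk[V]^G)$ and $\bbk(V)^G$ for semisimple $G$ with polynomial invariants, which fixes $\dim(\g{\cdot}v)^\perp=l$), so $E'_\psi$ is trivial over $V_{reg}$ and the codim--$3$ hypothesis lets Proposition~\ref{prop-b} finish. The ``main obstacle'' you flag is not actually an obstacle: the result \cite[1.9 \& 1.10]{jac} is stated for an arbitrary $G$-module of a semisimple group with polynomial invariants and the codim--$2$ property (the paper cites it in exactly this generality; Remark~\ref{essence} merely specialises it to the coadjoint case), and in any event your fallback argument --- solve for the coefficients over $V_{reg}$ via the frame and extend across the codimension~$\ge 2$ complement --- is precisely the standard proof of that freeness.
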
\begin{proof} 
Let $f_1,\ldots,f_l\in \bbk[V]^G$ be the basis invariants.
By \cite[1.9\,{\&}\,1.10]{jac}, $\Ker\hat\psi$ is a free $\bbk[V]$-module
generated by $\textsl{d}f_1, \ldots,\textsl{d}f_l$.
By \cite[Korollar\,1]{knop},  $V_{reg}\subset \{v\in V\mid 
(\textsl{d}f_1)_v, \ldots,(\textsl{d}f_l)_v \ \text{ are linearly independent}\}$.
It follows that $E'_\psi$ is a trivial bundle on $V_{reg}$.
\end{proof}

\noindent  It follows from \cite[Remark\,4.5]{codim3} that, under the assumptions of 
Theorem~\ref{thm-b}, a generic stabiliser for $(G:V)$ has to be non-trivial, i.e., 
$\max\dim G{\cdot}v < \dim V$.

\noindent {\bf Case (C)}.
For $x\in \g$, we set $V^x=\{v\in V\mid x{\cdot}v=v\}$.
Here  $\Ker\hat\tau=\{F:\g\to V\mid F(x)\in V^x\ \ \forall x\in \g \}$ and
$\Ima\hat\tau\subset \{\eus F:\g\to V\mid \eus F(x)\in x{\cdot}V\ \ \forall x\in \g \}$.
Set $\Omega_\tau=\{x\in \g\mid  \dim V^x \text{ is minimal}\}$.
Consider three vector bundles on $\Omega_\tau$:
\begin{gather*}
E'_\tau=\{(x,v)\mid x{\cdot}v=0\}=\{(x,v) \mid v\in V^x\}
 \subset\Omega_\tau\times V,  \quad
E_\tau= \Omega_\tau\times V, \\
E''_\tau=\{(x,x{\cdot}v )\mid x\in\Omega_\tau,\ v\in V\} \subset \Omega_\tau\times V
\end{gather*}
and the corresponding exact sequence $0\to E'_\tau\to E_\tau\to E''_\tau\to 0$. 
Arguing as in the proof
of Theorem~\ref{ala}, one obtains

\begin{prop}   \label{prop-c}
Suppose $E'_\tau$ is a trivial vector bundle and $\codim(\g\setminus\Omega_\tau)\ge 3$. Then
\[
\Ima\hat\tau= \{\eus F:\g\to V\mid \eus F(v)\in x{\cdot}V\ \ \forall x\in \g \} .
\]
In other words, if $\eus F(x)\in x{\cdot}V$ for all $x\in\Omega_\tau$, then there is
$F:\g\to V$ such that $\eus F(x)=x{\cdot}F(x)$ for all $x\in q$.
\end{prop}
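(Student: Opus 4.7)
The inclusion $\Ima\hat\tau \subseteq \{\eus F:\g\to V\mid \eus F(x)\in x{\cdot}V\ \forall x\in\g\}$ is tautological from the definition of $\hat\tau$, so the substance is the reverse inclusion. My plan is to transpose verbatim the proof of Theorem~\ref{ala} to the present setting, with $\g^*$ replaced by $\g$, the coadjoint action by the given representation, and $E',E,E''$ replaced by $E'_\tau,E_\tau,E''_\tau$ on the open subset $\Omega_\tau\subset\g$. The exact sequence $0\to E'_\tau\to E_\tau\to E''_\tau\to 0$ of vector bundles on $\Omega_\tau$ is already in place, and its existence uses only the constancy of $\dim V^x$ on $\Omega_\tau$, which is ensured by the definition of $\Omega_\tau$ together with the assumption that $E'_\tau$ is a vector bundle.

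The first step is to pass to the long exact sequence in cohomology
\[
H^0(\Omega_\tau,E_\tau)\to H^0(\Omega_\tau,E''_\tau)\to H^1(\Omega_\tau,E'_\tau).
\]
By the triviality hypothesis on $E'_\tau$, the right-hand term is a finite direct sum of copies of $H^1(\Omega_\tau,\eus O\vert_{\Omega_\tau})$. To show this vanishes, I would invoke, exactly as in the proof of Theorem~\ref{ala}, the local cohomology sequence \cite[cor.\,2.9, p.16]{SGA}
\[
H^1(\g,\eus O)\to H^1(\Omega_\tau,\eus O\vert_{\Omega_\tau})\to H^2_{\g\setminus\Omega_\tau}(\g,\eus O).
\]
The first term vanishes because $\g$ is affine; the third vanishes by \cite[cor.\,1.4, p.80]{SGA} using the hypothesis $\codim(\g\setminus\Omega_\tau)\ge 3$. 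Hence $H^1(\Omega_\tau,E'_\tau)=0$, so the map $H^0(\Omega_\tau,E_\tau)\to H^0(\Omega_\tau,E''_\tau)$ is surjective.

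Now, given $\eus F\in\Mor(\g,V)$ with $\eus F(x)\in x{\cdot}V$ for every $x\in\g$, the restriction $\eus F\vert_{\Omega_\tau}$ is a section of $E''_\tau$. Surjectivity produces a regular map $F_0:\Omega_\tau\to V$ with $\eus F(x)=x{\cdot}F_0(x)$ for $x\in\Omega_\tau$. Since $\g$ is smooth (as an affine space) and $\codim(\g\setminus\Omega_\tau)\ge 2$, $F_0$ extends to a polynomial morphism $F:\g\to V$, and the polynomial maps $\eus F$ and $x\mapsto x{\cdot}F(x)$ agree on the dense open set $\Omega_\tau$, hence on all of $\g$. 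This produces the required pre-image.

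The main obstacle is really only verifying that the vector-bundle and local-cohomology machinery employed in Theorem~\ref{ala} survives transposition to the new setting; once the exact sequence and the codim--$3$ input are in place, no new idea is needed and the argument is structurally identical. One should note that unlike Case~(A), where one also wants to compare with annihilators of $\bbk[V]^G$, no such list of equivalences is claimed here, which is consistent with the fact that the image under $\hat\tau$ has no evident invariant-theoretic interpretation and is compared only to the geometric condition $\eus F(x)\in x{\cdot}V$.
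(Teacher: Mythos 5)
Your proposal is correct and follows exactly the route the paper intends: the paper's proof of Proposition~\ref{prop-c} is precisely the phrase ``arguing as in the proof of Theorem~\ref{ala}'', i.e.\ the same transposition of the bundle exact sequence $0\to E'_\tau\to E_\tau\to E''_\tau\to 0$, the local cohomology vanishing from the codim--$3$ hypothesis, the triviality of $E'_\tau$, and the extension of $F_0$ across a subset of codimension $\ge 2$ that you carry out. No discrepancies to report.
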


\noindent
It is remarkable that if $G$ is reductive, then $\Ker\hat\tau$ is always a free 
$\bbk[\g]$-module \cite[Theorem\,8.6]{p05}.
There is also a special case in which all the assumptions of Proposition~\ref{prop-c}
are satisfied.

\begin{thm}  \label{thm-c}
Let $\g$ be reductive, $\te\subset\g$ a Cartan subalgebra, and $e\in \g$ a regular nilpotent element. Suppose that\\
\hbox to \textwidth{\ $(\Diamond)$ \hfil 
 $\dim V^e=\dim V^\te$. \hfil }
 \\[.7ex]
Then $\Omega_\tau\supset\g_{reg}$ and if 
$\eus F(x)\in x{\cdot}V$ for all $x\in\g_{reg}$, then there is
$F:\g\to V$ such that $\eus F(x)=x{\cdot}F(x)$ for all $x\in \g$.
\end{thm}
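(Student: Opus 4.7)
The plan is to adapt the argument of Theorem~\ref{ala} to the present setting, with $\g^*_{reg}$ replaced by $\g_{reg}$ and Kostant's classical theorem $\codim(\g\setminus\g_{reg})\ge 3$ (cited after Definition~\ref{def:codim}) taking the place of the $3$-wonderful hypothesis. Two preparatory facts must be established: (a) $\g_{reg}\subset\Omega_\tau$, equivalently $\dim V^x=\dim V^\te$ for every $x\in\g_{reg}$; and (b) the vector bundle $E'_\tau\vert_{\g_{reg}}$ is trivial.

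For (a), I use the Kostant slice $\es=e+\g^f$ (where $(e,h,f)$ is an $\tri$-triple containing $e$). The slice meets every regular $\Ad G$-orbit exactly once, and the twisted one-parameter action $t\cdot x=t^{2}\Ad(t^{-h})(x)$ preserves $\es$ and contracts it to $\{e\}$ as $t\to 0$. Since $V^{t\cdot x}=\rho(t^{-h})V^x$, the function $x\mapsto\dim V^x$ is constant along each $\mathbb{G}_m$-orbit; combined with the upper semicontinuity of $\dim V^x$ and the hypothesis $\dim V^e=\dim V^\te$, this gives $\dim V^x\le\dim V^\te$ on $\es$. The opposite inequality $\dim V^x\ge\dim V^\te$ holds on all of $\g$, because the minimum of $\dim V^x$ is attained on the dense regular semisimple locus, where the value is manifestly $\dim V^\te$. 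Hence $\dim V^x=\dim V^\te$ on $\es$, and by $\Ad G$-invariance on $\g_{reg}=\Ad(G){\cdot}\es$.

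For (b), \cite[Theorem~8.6]{p05} supplies that $\Ker\hat\tau$ is a free $\bbk[\g]$-module, necessarily of rank $r=\dim V^\te$ by (a). A $\bbk[\g]$-basis $F_1,\ldots,F_r$ of $\Ker\hat\tau$ produces $r$ global polynomial sections of $E'_\tau$; we must check that they form a frame on $\g_{reg}$. On the Kostant slice, the twisted $\mathbb{G}_m$-action on $V$ via $\rho(t^{-h})$ turns $E'_\tau\vert_\es$ into a $\mathbb{G}_m$-equivariant vector bundle on the $\mathbb{G}_m$-contractible affine space $\es$, hence an equivariantly trivial bundle with fibre $V^e$. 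The evaluation $F\mapsto F(e)$ then maps the $r$-dimensional space $\Ker\hat\tau\otimes_{\bbk[\g]}\bbk_e$ to the $r$-dimensional $V^e$ (equality of ranks coming from $(\Diamond)$); the $\mathbb{G}_m$-equivariance together with freeness over $\bbk[\es]$ forces this evaluation to be an isomorphism, so $F_1\vert_\es,\ldots,F_r\vert_\es$ trivialise $E'_\tau\vert_\es$. Finally, $\Ad G$-equivariance spreads this frame over $\g_{reg}=\Ad(G){\cdot}\es$.

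Once (a) and (b) are in place, the cohomological engine of Theorem~\ref{ala} applies verbatim: $\codim(\g\setminus\g_{reg})\ge 3$ combined with cohomology with supports yields $H^{1}(\g_{reg},\eus O\vert_{\g_{reg}})=0$, triviality of $E'_\tau\vert_{\g_{reg}}$ then gives $H^{1}(\g_{reg},E'_\tau\vert_{\g_{reg}})=0$, and the induced surjection $\gamma\colon H^{0}(\g_{reg},E_\tau)\twoheadrightarrow H^{0}(\g_{reg},E''_\tau)$ produces a polynomial $F_{0}\colon\g_{reg}\to V$ with $\eus F(x)=x\cdot F_{0}(x)$ on $\g_{reg}$; Hartogs-style extension (via $\codim\ge 2$) lifts $F_{0}$ to $F\in\Mor(\g,V)$, and the equality $\eus F(x)=x\cdot F(x)$ on the dense open $\g_{reg}$ propagates to all of $\g$. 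The chief obstacle is the trivialisation step (b): propagating the clean $\mathbb{G}_m$-trivialisation on $\es$ to a global frame on $\g_{reg}=\Ad(G){\cdot}\es$ needs the structural freeness of \cite[Theorem~8.6]{p05} together with the rank identity $\dim V^{e}=\dim V^\te$ supplied by $(\Diamond)$.
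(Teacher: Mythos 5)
Your overall skeleton is the same as the paper's: establish $\g_{reg}\subset\Omega_\tau$, invoke Kostant's $\codim(\g\setminus\g_{reg})=3$, get triviality of $E'_\tau$ from the freeness of $\Ker\hat\tau$ \cite[Theorem\,8.6]{p05}, and then run the cohomological argument of Theorem~\ref{ala}. Your step (a) is fine and in fact more explicit than the paper's (which argues via regular semisimple and regular nilpotent elements); the only blemish is the phrase that on the regular semisimple locus the value of $\dim V^x$ is ``manifestly'' $\dim V^\te$ — that is only true at elements on which no non-root weight vanishes, but since you only use that the \emph{minimum} of $\dim V^x$ equals $\dim V^\te$, the conclusion stands.

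The genuine gap is in step (b), at the sentence ``the $\mathbb{G}_m$-equivariance together with freeness over $\bbk[\es]$ forces this evaluation to be an isomorphism.'' Equivariant triviality of the bundle $E'_\tau\vert_\es$ tells you it has a frame of sections over $\es$, but not that this frame comes from $\Ker\hat\tau$; and freeness of $\Ker\hat\tau\otimes_{\bbk[\g]}\bbk[\es]$ says nothing about the image of the evaluation $\Ker\hat\tau\otimes\bbk_e\to V^e$ — a priori this map between two $r$-dimensional spaces could have a kernel. What is missing is precisely the surjectivity of $\Ker\hat\tau\otimes\bbk[\es]\to\Gamma(\es,E'_\tau\vert_\es)$, and graded Nakayama reduces that to surjectivity of the evaluation at $e$, which is circular. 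The standard way to close this (and it is what makes the paper's one-line claim ``freeness suffices'' legitimate, rendering your slice detour unnecessary) is: for $x_0\in\Omega_\tau$ pick a maximal minor $\delta$ of the matrix of $\rho(x)$ with $\delta(x_0)\neq0$; Cramer-type formulas give regular sections $s_i=g_i/\delta^N$ of the kernel on $D(\delta)$ spanning $V^x$ at every point; since $x{\cdot}g_i(x)=0$ on the dense set $D(\delta)$, it holds on all of $\g$, so $g_i\in\Ker\hat\tau$ is a $\bbk[\g]$-combination of the basis $F_1,\dots,F_r$, whence $F_1(x_0),\dots,F_r(x_0)$ span $V^{x_0}$ for \emph{every} $x_0\in\Omega_\tau$. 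Finally, your ``$\Ad G$-equivariance spreads this frame'' also needs a word of care: a free basis of $\Ker\hat\tau$ need not consist of equivariant morphisms; one should instead use the $G$-stability of $\Ker\hat\tau$ (writing $g\ast F_i=\sum_j u^{(g)}_j F_j$), which shows that the rank of $\mathsf{span}\{F_j(x)\}$ is constant along $G$-orbits.
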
\begin{proof}
It easily follows from assumption $(\Diamond)$ that 
$\dim V^z=\dim V^{\te}$ for any regular  semisimple $z\in\g$.
Therefore the minimal value of $\dim V^x$ is the dimension of the zero-weight
space of $V$, which is positive. 
That is, the open subset $\Omega_\tau$ contains the regular semisimple and nilpotent
elements. It follows that $\Omega_\tau\supset\g_{reg}$.
By \cite{ko63}, $\codim(\g\setminus \g_{reg})= 3$. For triviality $E'_\tau$, it is enough
to prove that
$\Ker\hat\tau$ is a free $\bbk[\g]$-module, and the latter has been done in 
\cite[Theorem\,8.6]{p05}.
\end{proof}

\begin{rmk} 
The above equality $\dim V^z=\dim V^{\te}$ (with $z$ semisimple) means that each 
nonzero weight of
$V$ (with respect to $\te$) is a multiple of a root. Using this observation,
one easily  obtains
the complete list of irreducible representations of simple Lie algebras
satisfying assumption $(\Diamond)$.
Here it is:

\begin{itemize}
\item the adjoint representation of $\g$;
\item $(\GR{B}{n}, \vp_1)$,  $(\GR{B}{n}, 2\vp_1)$, $(\GR{C}{n}, \vp_2)$, 
$(\GR{F}{4}, \vp_1)$, $(\GR{G}{2}, \vp_1)$, $(\GR{G}{2}, 2\vp_1)$;
\item $(\GR{A}{1}, 2m\vp_1)$ for any $m\in\BN$.
\end{itemize}
\end{rmk}

\noindent
Actually, each of  the cases (A), or (B), or (C) deserves a special thorough treatment.
In the following sections, we concentrate on case (A), partly in view of its connections with
differential operators. Another reason is that similar properties of sequences (B) and (C)
for representations of reductive groups have been studied in \cite[Section\,8]{p05}.
                 
\section{Differential operators and invariant polynomials} 
\label{sect:diff_op}
                                                    
\noindent  
In Section~\ref{sect:mod}, three possibilities to generalise Dixmier's results
have been discussed. These are related to three sequences of modules over polynomial rings.
It seems that case~(A) is the most interesting one, because the problem can further be 
transferred to the setting of differential operators on $V$.

The discussion of case (A) in Section~\ref{sect:mod} shows that if a $G$-module $V$
satisfies certain explicit conditions, then
a vector field $\eus F:V\to V$ annihilates all of\/
$\bbk[V]^G$ if and only if there is $F\in \Mor(V,\g)$ such that $\eus F(v)=F(v){\cdot}v$
for all $v\in V$.
In other words,  \vskip.7ex
\hbox to \textwidth{\ $(\lozenge)$ \hfil
$\eus F\{f\}=0$ for any $f\in \bbk[V]^G$ if and only if $\eus F\in \bbk[V]\boldsymbol{\varsigma}(\g)$.
\hfil}

\noindent (cf. Remark~\ref{istolkov}).
Let us restate $(\lozenge)$ using the algebra of differential operators $\eus D(V)$.

Let $\eus C=\mathsf{Cent}_{\eus D(V)}(\bbk[V]^G)$ denote the centraliser of $\bbk[V]^G$ 
in $\eus D(V)$.
Clearly, $\eus C$ contains $\bbk[V]$ and $\boldsymbol{\varsigma}(\g)$. 
Let $\eus A$ be the subalgebra of $\eus C$ generated by $\bbk[V]$ and $\boldsymbol{\varsigma}(\g)$.
Note that  a vector field $\eus F$ and a polynomial $f\in\bbk[\g]^G$
commute as differential operators 
if and only if $\eus F\{f\}=0$. 
Therefore assertion $(\lozenge)$ can also be interpreted as the coincidence
of $\eus A$ and $\eus C$ on the level of vector fields. 

Motivated by Dixmier's result \cite[Theorem\,2.1]{dixm} and a question by Barlet, 
Levasseur and Stafford proved that $\eus A=\eus C$ for the adjoint 
representation of a semisimple Lie algebra $\g$ \cite{ls}. 
In this section, we prove such an equality in a more general setting.

We  assume below that the $G$-module $V$ has the property that $\bbk[V]^G$ is 
finitely generated and  the quotient field of 
$\bbk[V]^G$ equals $\bbk(V)^G$. The latter is equivalent to that a generic fibre of
$\pi_V: V\to V\md G$ contains a dense $G$-orbit.
 
We work with the sequence of graded $\bbk[V]$-modules
\[
0\to\Ker\hat\phi\to \Mor(V,\g)\stackrel{\hat\phi}{\to} \Mor(V,V) . 
\]
Here $\rk\hat\phi=\max_{v\in V}\dim \g{\cdot}v$ \cite[Prop.\,1.7]{jac} and therefore
$\rk(\Ker\hat\phi)=\min_{v\in V}\dim\g_v$.

\noindent
Set $m=\min_{v\in V}\dim\g_v=\dim\g -\dim V+\dim V\md G$.
Assume that $\Ker\hat\phi$ is a free (graded) $\bbk[V]$-module, 
and let $F_1,\dots, F_m$ be a homogeneous basis for $\Ker\hat\phi$.
Write $E$ for the $\bbk[V]$-module $\Ima\hat\phi$. Then we obtain the exact sequence
\begin{equation}  \label{eq:exact}
 0\to \bigoplus_{i=1}^m \bbk[V]\,F_i \stackrel{\hat\beta}
\to\Mor(V,\g)\stackrel{\hat\phi}{\to} E\to 0 \ .
\end{equation}
\noindent
Using the morphisms $F_i:V\to \g$, we define a variety $Y$ as follows:
\[
    Y=\{(v,\eta)\in V\times\g^* \mid \langle F_i(v),\eta\rangle=0, \ i=1,\dots,m\}.
\]
Recall that $\Ker\hat\phi$ is a $G$-stable submodule of $\Mor(V,\g)$. Therefore
for any $g\in G$ there exist $u_1^{(g)},\dots,u_m^{(g)}\in \bbk[V]$ such that
$g\ast F_i=\sum_{j} u_j^{(g)} F_j$. This readily implies that $Y$ is $G$-stable.

\noindent
Choose a basis $e_1,\ldots,e_n$ for $\g$. Using this basis, we identify
$\Mor(V,\g)=\bbk[V]\otimes \g$ with $\bbk[V]^n$.
Then we can write $F_j(v)=\sum_{i=1}^n F_{ij}(v)e_i$, where 
$F_{ij}\in \bbk[V]$.
If we regard~\eqref{eq:exact} as a sequence
\[
    0\to\bbk[V]^m\stackrel{\hat\beta}{\to} \bbk[V]^n\stackrel{\hat\phi}{\to} E \to 0 \ ,
\]
then  $\hat\beta$ becomes an $n\times m$-matrix with entries $F_{ij}$.
Let $I_t(\hat\beta)$ be the ideal of $\bbk[V]$ generated by $t\times t$ minors of $\hat\beta$.
Following \cite{hsv}, consider the series of determinantal conditions
for the presentation of $E$: 
\\[1ex]
\hbox to \textwidth{\quad 
$(\mathcal F_d)$\hfil $\hot I_t(\hat\beta)\ge m-t+1+d$ \ \ \ \text{for } $1\le t\le m$. \hfil}

\vskip.7ex\noindent
The ideals $I_t(\hat\beta)$ are independent of the presentation of $E$. These
are  {\it Fitting ideals\/} of $E$, see e.g. \cite[1.1]{vasc}.
Let $\mathsf{Sym}_{\bbk[V]}(E)$ denote the symmetric algebra of 
the $\bbk[V]$-module $E$.  Then 
$ \mathsf{Sym}_{\bbk[V]}(E)=\bigoplus_{n=0}^\infty \mathsf{Sym}_{\bbk[V]}(E)_n$
and each $\mathsf{Sym}_{\bbk[V]}(E)_n$ is a finitely generated graded $\bbk[V]$-module.
Conditions $(\mathcal F_d)$ are very useful in the study of properties of the 
symmetric algebras of modules.  Utility of these conditions in Representation and 
Invariant theory has been demonstrated in \cite{jac, ls, p05}.

\begin{thm}   \label{thm:main4-1}
Suppose that $\Ker\hat\phi$ is a free module and condition $(\mathcal F_2)$ is 
satisfied by $E$. Then
\begin{itemize}
\item[\sf (i)] \ $\mathsf{Sym}_{\bbk[V]}(E)$ is a factorial domain of Krull dimension
$\dim V+\dim\g-m$.
\item[\sf (ii)] \ $Y$ is an irreducible
 factorial complete intersection, and\/ $\bbk[Y]=\mathsf{Sym}_{\bbk[V]}E$.
\item[\sf (iii)] \ $Y=\ov{\Ima(\varkappa)}$, where 
$\varkappa:  V\times V^*\to V\times \g^*$ is defined by 
$\varkappa(v,\xi)=(v,\mu(v,\xi))$. Here $v\in V, \xi\in V^*$ and $\mu: V\times V^* \to \g^*$ is the moment mapping.
\item[\sf (iv)] \ Let $p:Y\to V$ be the (surjective) projection. If $\mathfrak I$ is a prime ideal of
$\bbk[V]$ with $\hot(\mathfrak I)\ge 2$, then $\hot(\mathfrak I\,\bbk[Y])\ge 2$.
\end{itemize}
\end{thm}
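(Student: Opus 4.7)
The plan is to realise $\mathsf{Sym}_{\bbk[V]}(E)$ as the coordinate ring of $Y$, derive (i) and (ii) from the complete-intersection structure imposed by $(\mathcal F_2)$, and then deduce (iii) and (iv) as geometric consequences. For (i) and (ii), I would pick coordinates $T_1,\ldots,T_n$ on $\g^*$ dual to the basis $e_1,\ldots,e_n$ of $\g$ and use \eqref{eq:exact} to obtain the presentation
\[
\mathsf{Sym}_{\bbk[V]}(E)=\bbk[V\times\g^*]/(L_1,\ldots,L_m),\qquad L_j=\sum_iF_{ij}T_i.
\]
Since $L_j(v,\eta)=\langle F_j(v),\eta\rangle$, these are precisely the defining equations of $Y$. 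Condition $(\mathcal F_2)$ implies $(\mathcal F_1)$, so the Huneke--Simis--Vasconcelos theorem of \cite{hsv} gives that $(L_1,\ldots,L_m)$ is a prime complete intersection in $\bbk[V\times\g^*]$, that the canonical map $\mathsf{Sym}_{\bbk[V]}(E)\to\bbk[Y]$ is an isomorphism, and that $Y$ is irreducible and Cohen--Macaulay of dimension $\dim V+\dim\g-m$. For factoriality, I would combine the Jacobian criterion (which locates $\mathrm{Sing}(Y)\subset p^{-1}(V(I_m(\hat\beta)))$) with the height bound $\hot I_m(\hat\beta)\ge 3$ supplied by $(\mathcal F_2)$ to conclude normality, and then upgrade normality to factoriality by the refined statements in \cite{hsv} (or by a Grothendieck--Samuel descent for complete intersections).

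Part (iii) is two containments. The inclusion $\Ima\varkappa\subset Y$ follows from the identity
\[
\langle F_i(v),\mu(v,\xi)\rangle=\langle F_i(v){\cdot}v,\xi\rangle=0,
\]
which holds because $F_i(v)\in\g_v$. For the reverse inclusion, a dimension count suffices: for generic $v\in V$, $\mu(v,V^*)=(\g_v)^\perp$ has dimension $\dim\g-m$, so $\dim\overline{\Ima\varkappa}=\dim V+\dim\g-m=\dim Y$, and the irreducibility of $Y$ from (ii) closes the argument.

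For (iv), Cohen--Macaulayness of $\bbk[Y]$ lets me rewrite $\hot(\mathfrak I\,\bbk[Y])=\codim_Y p^{-1}(Z)$, where $Z=V(\mathfrak I)$. I would stratify $V$ by $U_t=V(I_{t+1}(\hat\beta))\setminus V(I_t(\hat\beta))$, the locus where $F_1(v),\ldots,F_m(v)$ span a $t$-dimensional subspace; the fibre of $p$ over a point of $U_t$ has dimension $\dim\g-t$. For the open stratum $t=m$, $\dim Z\le\dim V-2$ gives $\codim_Y p^{-1}(Z\cap U_m)\ge 2$ directly. For $t<m$, $(\mathcal F_2)$ provides $\dim V(I_{t+1}(\hat\beta))\le\dim V-(m-t+2)$, whence $\dim p^{-1}(Z\cap U_t)\le\dim V+\dim\g-m-2$, so $\codim\ge 2$ on that stratum too. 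The expected main obstacle is the factoriality assertion in (ii): primeness and the complete-intersection property are the standard output of HSV under $(\mathcal F_1)$, but extracting factoriality from $(\mathcal F_2)$ alone demands either a careful identification of $\mathrm{Sing}(Y)$ via the Fitting ideals together with a complete-intersection descent of factoriality, or a direct appeal to the relevant factoriality statement in \cite{hsv}.
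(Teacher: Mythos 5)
Most of your plan coincides with the paper's proof: the presentation $\mathsf{Sym}_{\bbk[V]}(E)=\bbk[V\times\g^*]/(\widehat F_1,\dots,\widehat F_m)=\bbk[Y]$, the two containments and the generic-fibre dimension count in (iii), and your stratified estimate for (iv) (which the paper delegates to the remarks in \cite{hsv}, where that codimension-two property is shown to be \emph{equivalent} to $(\mathcal F_2)$) are all correct and essentially the argument given in the text. The genuine gap is exactly where you flagged it: the factoriality claim in (i)--(ii). Your Serre-criterion argument does give normality, since $(\mathcal F_2)$ bounds each stratum of the singular locus by codimension $2$ in $Y$ and a complete intersection is Cohen--Macaulay. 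But neither of your proposed upgrades from normal to factorial works. The Grothendieck--Samuel theorem for local complete intersections requires regularity in codimension $\le 3$, i.e.\ a singular locus of codimension $\ge 4$; condition $(\mathcal F_2)$ only yields codimension $\ge 2$ (it cannot yield more, precisely because it is equivalent to statement (iv)), and in the paper's main applications ($\N$-regular symmetric pairs, where the subregular stratum makes the inequality in $(\clubsuit)$ an equality) codimension $2$ is actually attained. Nor does \cite{hsv} supply a factoriality criterion of the kind you invoke; their results concern linear type, Cohen--Macaulayness and normality.

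The missing ingredient is the one the paper uses: because $\Ker\hat\phi$ is free, the sequence \eqref{eq:exact} is a free resolution, so $E$ has projective dimension at most one, and Avramov's theorem \cite[Prop.~3 \& 6]{luch} states that for such a module over a factorial base the symmetric algebra is a factorial domain of dimension $\dim V+\dim\g-m$ as soon as $(\mathcal F_2)$ holds (the proof there proceeds via divisor-class/Nagata-type arguments, not via the codimension of the singular locus). With that substitution part (i) is immediate, and (ii) follows from your presentation of $\bbk[Y]$ exactly as you wrote; the rest of your argument for (iii) and (iv) stands as is. As written, however, ``normal complete intersection $\Rightarrow$ factorial'' is not a valid step, and no bound available under the stated hypotheses will make the SGA~2 theorem applicable.
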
\begin{proof}
(i) \ The exact sequence \eqref{eq:exact} shows that $E$ has projective dimension at most
one. Therefore part (i) follows from 
\cite[Prop. 3 \& 6]{luch}.

(ii) \ The universal property of symmetric algebras implies that
$\Sym_{\bbk[V]}(E)$ is the quotient of $\Sym_{\bbk[V]}(\bbk[V]\otimes \g)=
\bbk[V\times \g^*]$ modulo the ideal "generated by the image of $\hat\beta$". More precisely,
each $F_i$ determines the polynomial $\widehat F_i\in \bbk[V\times\g^*]$ by the rule
$\widehat F_i(v,\eta)=\langle F_i(v),\eta\rangle$, $\eta\in\g^*$,  and the ideal in question
is generated by $\widehat F_1,\dots,\widehat F_m$.
Hence $\Sym_{\bbk[V]}(E)=\bbk[Y]$, and the other assertions follow from (i).

(iii) \ Clearly, $\ov{\Ima(\varkappa)}$ is an irreducible subvariety of $V\times \g^*$.
Taking the (surjective) projection to $V$ and looking at the dimension of the 
generic fibre, one finds that $\dim \ov{\Ima(\varkappa)}=\dim V+\max(\dim \g{\cdot}v)=
\dim V+\dim\g-m$. Since $F_i(v){\cdot}v=0$ for all $v\in V$, we have
\[
  \widehat F_i(v,\mu(v,\xi))=\langle F_i(v), \mu(v,\xi)\rangle= \langle F_i(v){\cdot}v, \xi\rangle= 0 \ .
\] 
Hence each  $\widehat F_i$ vanishes on $\ov{\Ima\varkappa}$
and  $\ov{\Ima(\varkappa)}\subset Y$.
Since both varieties have the same dimension and are 
irreducible, they are equal.

(iv) \ According to \cite{hsv}, Remarks on pp. 664--5, this property  is equivalent to 
condition $(\mathcal F_2)$.   See also \cite[Remark\,1.3.9]{vasc}
\end{proof}

As a by-product of this theorem, we obtain the following description of 
$\Sym_{\bbk[V]}(E)=\bbk[Y]$.  Consider the linear map 
$\mathsf{i}:\g\to V^*\otimes V$ which is induced by the  moment map 
$\mu: V\times V^*\to\g^*$. (The map $\mathsf{i}$ is injective if and only if the representation
$\g\to \mathfrak{gl}(V)$ is faithful.)
In this way, we obtain certain copy
of $\g$ sitting in  $V^*\otimes V\subset \bbk[V\times V^*]_2$. 
Then $\bbk[Y]$ is isomorphic to the subalgebra of $\bbk[V\times V^*]$
generated by $\bbk[V]$ and $\mathsf i(\g)$.

\begin{thm}   \label{thm:main4-2}
Suppose that 
\begin{itemize}
\item[\sf 1.]  \ $\bbk[V]^G$ is a polynomial algebra freely generated by $f_1,\dots,f_l$;
\item[\sf 2.]  \ $\Ker\hat\phi$ is a free $\bbk[V]$-module;
\item[\sf 3.]  \   $V$ has the codim--$2$ property and  $\{(\textsl{d}f_i)_v\}$ are 
linearly independent for any $v\in V_{reg}$;
\item[\sf 4.]  \  condition $(\mathcal F_2)$ is satisfied for $E=\Ima\hat\phi$.
\end{itemize}
Let $\eus A$ and $\eus C$ be given the filtration induced from $\ml D(V)$.
Then $\gr\eus C= \gr\eus A \simeq \Sym_{\bbk[V]}(E)$.
\end{thm}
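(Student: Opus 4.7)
The plan is to pass to the associated graded algebras with respect to the filtration by order of differential operator, identify $\gr\eus D(V)$ with $\bbk[V\times V^*]$ equipped with its standard Poisson bracket (so that $\sigma([D_1,D_2])=\{\sigma(D_1),\sigma(D_2)\}$), and show that both $\gr\eus A$ and $\gr\eus C$ coincide with $\bbk[Y]=\Sym_{\bbk[V]}(E)$, viewed as a subalgebra of $\bbk[V\times V^*]$. Since $\eus A\subseteq\eus C$ automatically gives $\gr\eus A\subseteq\gr\eus C$, everything reduces to the two assertions $\gr\eus A=\bbk[Y]$ and $\gr\eus C\subseteq\bbk[Y]$.

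The first assertion is the straightforward one. Because $\eus A$ is generated by $\bbk[V]$ in degree $0$ and by $\boldsymbol{\varsigma}(\g)$ in degree $1$, the graded algebra $\gr\eus A$ is the $\bbk[V]$-subalgebra of $\bbk[V\times V^*]$ generated by the principal symbols $\sigma(\boldsymbol{\varsigma}(x))\colon(v,\xi)\mapsto\langle x\cdot v,\xi\rangle$, which is precisely $\mathsf{i}(\g)$. Consequently $\gr\eus A$ equals the image of $(\mathrm{id}\times\mu)^*\colon\bbk[V\times\g^*]\to\bbk[V\times V^*]$, which by Theorem~\ref{thm:main4-1}(iii) is the coordinate ring $\bbk[\ov{\Ima\varkappa}]=\bbk[Y]$; Theorem~\ref{thm:main4-1}(ii) then identifies this with $\Sym_{\bbk[V]}(E)$.

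For the harder inclusion $\gr\eus C\subseteq\bbk[Y]$, any $D\in\eus C$ has its symbol Poisson-commuting with $\bbk[V]^G$, so $\gr\eus C$ lies in the Poisson centraliser $\eus Z$ of $\bbk[V]^G$ inside $\bbk[V\times V^*]$. The Hamiltonian vector field of $f\in\bbk[V]^G$ at $(v,\xi)$ points in the $V^*$-direction $(df)_v$, so $g\in\eus Z$ if and only if $\langle(df)_v,(\partial_\xi g)(v,\xi)\rangle=0$ for every such $f$ and every $(v,\xi)$. The identity $\mathsf{span}\{(df_i)_v\}=(\g\cdot v)^\perp$ on $V_{reg}$---a consequence of assumptions 1 and 3, together with $\bbk(V)^G$ being the quotient field of $\bbk[V]^G$---then forces $(\partial_\xi g)(v,\cdot)\in\g\cdot v$, and hence $g(v,\cdot)\in S(\g\cdot v)$ as a polynomial on $V^*$. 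Since $\bbk[Y|_{V_{reg}}]\subset\bbk[V_{reg}\times V^*]$ is precisely the subalgebra of functions whose fibrewise restriction at each $v$ belongs to $S(\g\cdot v)$, one concludes $g|_{V_{reg}\times V^*}\in\bbk[Y|_{V_{reg}}]$.

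The main obstacle, and the decisive use of Theorem~\ref{thm:main4-1}(iv), is the global extension. Assumption 3 gives $\codim(V\setminus V_{reg})\ge 2$, so that $\bbk[V\times V^*]=\bbk[V_{reg}\times V^*]$ since $V\times V^*$ is smooth. Theorem~\ref{thm:main4-1}(iv) transfers this codimension bound to $Y$, yielding $\codim_Y(Y\setminus Y|_{V_{reg}})\ge 2$; combined with the factoriality from Theorem~\ref{thm:main4-1}(ii)---whence $Y$ is normal---this gives $\bbk[Y]=\bbk[Y|_{V_{reg}}]$. Fitting both identifications into the commutative square of inclusions, one obtains $\bbk[Y]=\bbk[V\times V^*]\cap\bbk[Y|_{V_{reg}}]$ inside $\bbk[V_{reg}\times V^*]$, so $g\in\bbk[Y]$. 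Combining this with $\gr\eus A=\bbk[Y]$ yields $\gr\eus C=\gr\eus A\simeq\Sym_{\bbk[V]}(E)$, as required.
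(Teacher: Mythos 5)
Your strategy is workable and, in outline, parallels the paper's: the substance is the inclusion $\gr\eus C\subseteq\bbk[Y]$, obtained by an analysis over $V_{reg}$ followed by extension across $Y\setminus p^{-1}(V_{reg})$ using factoriality (hence normality) of $Y$ and the codimension-$2$ bound coming from Theorem~\ref{thm:main4-1}(iv) together with the codim--$2$ property of $V$ --- exactly the paper's final step (there phrased via Richardson's lemma). Where you differ is on the regular locus: the paper proves the equality of local rings $(\gr\eus C)_v=(\gr\eus A)_v=\bbk[Y]_v$ at each $v\in V_{reg}$ via an explicit basis of derivations (Lemmas~\ref{lm:3.1} and \ref{lm:3.2}), whereas you use the Poisson-centraliser characterisation of symbols and a fibrewise linear-algebra argument; if completed, your variant even gives the slightly stronger commutative statement that the whole Poisson centraliser of $\bbk[V]^G$ in $\bbk[V\times V^*]$ equals $\bbk[Y]$.

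Two steps, however, are asserted rather than proved. First, ``$\eus A$ is generated by $\bbk[V]$ and $\boldsymbol{\varsigma}(\g)$, hence $\gr\eus A$ is the subalgebra generated by $\bbk[V]$ and $\mathsf i(\g)$'' is not a valid inference: for a filtered subalgebra, cancellation of top-order terms can produce elements whose symbols lie outside the subalgebra generated by the generators' symbols, and ruling this out here is essentially equivalent to the theorem itself. Fortunately only the easy containment $\bbk[Y]\subseteq\gr\eus A$ is needed, since the sandwich $\bbk[Y]\subseteq\gr\eus A\subseteq\gr\eus C\subseteq\bbk[Y]$ closes the argument once the hard inclusion is known. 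Second --- and this is the real gap --- the claim that $\bbk[Y|_{V_{reg}}]$ is \emph{precisely} the set of regular functions on $V_{reg}\times V^*$ whose restriction to each fibre $\{v\}\times V^*$ lies in $S(\g{\cdot}v)$ is the crux and needs proof. You must check (a) that for $v\in V_{reg}$ the fibre of $Y$ over $v$ is exactly $\mu(v,V^*)=(\g_v)^\perp$, equivalently that $F_1(v),\dots,F_m(v)$ span $\g_v$ for \emph{every} regular $v$; this is not among your hypotheses verbatim, but follows from $Y=\ov{\Ima\varkappa}$ because over $V_{reg}$ the dimension of $\g{\cdot}v$ is constant, so $\{(v,\eta)\mid \eta\in(\g_v)^\perp\}$ is a sub-bundle of $V_{reg}\times\g^*$ and in particular closed there; and (b) that a regular function on $V_{reg}\times V^*$ constant on the fibres of $\varkappa$ descends to a \emph{regular} function on this sub-bundle, e.g.\ via Zariski-local sections of the surjective bundle map. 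These are exactly the points the paper handles through Lemma~\ref{lm:3.1} and the birationality argument of Lemma~\ref{lm:3.2}; once you supply them, your proof goes through.
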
\begin{proof}
The proof  of Levasseur and Stafford  for the adjoint representation of
a semisimple $\g$  \cite[Section~3]{ls}
carries over {\sl mutatis mutandis\/} to this more general situation. 
The following is very close to their original proof.

\begin{lm}    \label{lm:3.1}
For $v\in V_{reg}$, let $\eus R$ denote the local ring of $V$ at $v$. Then there exists a basis 
of derivations $\{\partial_i \mid i=1,\dots,s\}$ of\/ $\Der\eus R$ such that 
$\partial_i(f_j)=\delta_{i,j}$ for all $1\le i,j\le l$ and $\eus R\boldsymbol{\varsigma}(\g)=
\bigoplus_{i=l+1}^s \eus R\partial_i$. \quad  \normalfont{[Here $s=\dim V$.]}
\end{lm}
\begin{proof}
There is a one-to-one correspondence between the bases for the $\eus R$-module
$\Der\eus R$ and the $\bbk$-bases for the tangent space $T_v(V)$. 
Since $\{(\textsl{d}f_i)_v, \ i=1,\dots,l\}$ are linearly independent, 
the annihilator of $\mathsf{span}\{(\textsl{d}f_i)_v, \ i=1,\dots,l\}$ in
$T_v(V)$ is $\g{\cdot}v$. Choose a basis $(e_1,\dots,e_s)$ in $V$ such that 
$\langle e_i, (\textsl{d}f_j)_v\rangle=\delta_{i,j}$, $1\le i,j\le l$, and 
$\mathsf{span}\{e_{l+1},\dots,e_s\}=\g{\cdot}v$. Then the corresponding basis for
$\Der\eus R$ will work. (Cf. the proof of Lemma\,3.1 in \cite{ls}).
\end{proof}

Since $\gr\eus C\subset \gr\ml D(V)=\bbk[V\times V^*]$, certainly $\gr\eus A\subset\gr\eus C$
are domains. 
Next, $\gr\eus A$ contains the subalgebra generated by $\bbk[V]$ and $\gr(\boldsymbol{\varsigma}(\g))$.
It is easily seen that  $\gr(\boldsymbol{\varsigma}(\g))=\mathsf i(\g)$. It follows from Theorem~\ref{thm:main4-1} 
that, on the geometric level, we obtain the chain of dominant morphisms
\begin{equation}   \label{eq:chain}
   V\times V^* \to \spe(\gr\eus C)\to \spe(\gr\eus A)\to \ov{\Ima\varkappa}=
   Y\subset V\times\g^* \ ,
\end{equation}
where the resulting map $V\times V^* \to \ov{\Ima\varkappa}$ is just $\varkappa$.

\begin{lm}  \label{lm:3.2} The above mapping
$\vp: \spe(\gr\eus C)\to \ov{\Ima\varkappa}$ is birational.
\end{lm}\begin{proof}
We prove a more precise assertion that, for any $v\in V_{reg}$, there is the equality
of local rings $(\gr\eus C)_v=(\gr\eus A)_v=\bbk[Y]_v$.

Recall that $\eus R=\bbk[V]_v$.  For any ring $\mathfrak R$ containg $\bbk[V]$, 
its localisation with respect to the 
multiplicative subset $\{f\in\bbk[V]\mid f(v)\ne 0\}$ is denoted by $\mathfrak R_v$.
Then 
$ D(V)_v=\eus R\{ \partial_1,\dots,\partial_s\}$ is the non-commutative algebra generated 
by derivations constructed in Lemma~\ref{lm:3.1} and 
\[
\eus C_v=\{ D\in \ml D(V)_v \mid [D,f_i]=0, \ i=1,\dots,l\} .
\]
The last formula and Lemma~\ref{lm:3.1} readily imply that 
$\eus C_v=\eus A_v=\eus R\{ \partial_{l+1},\dots,\partial_s\}$. Let $\bar\partial_i$ denotes the
image of $\partial_i$ in $\gr\ml D(V)$. Lemma~\ref{lm:3.1} also shows that
\[
 \eus R{\cdot}\mathsf i(\g)=\bigoplus_{i=l+1}^s \eus R\,\bar\partial_i .
\]
Thus, 
$ \gr\eus C_v=\gr\eus A_v=\gr\eus R\{ \partial_{l+1},\dots,\partial_s\}=
    \eus R[ \bar\partial_{l+1},\dots,\bar\partial_s]_v=\eus R[\mathsf i(\g)]_v=\bbk[Y]_v$.
\end{proof}

\noindent 
Finally, we prove that $\vp: \spe(\gr\eus C)\to \ov{\Ima\varkappa}$ is an isomorphism.
We already know that $\vp$ is birational and that $\ov{\Ima\varkappa}$ is normal
(Theorem~\ref{thm:main4-1}). By Richardson's lemma, see e.g. \cite[3.2~Lemme\,1]{brion},
it suffices to verify that $\Ima\vp$ contains an open subset whose complement
is of codimension $\ge 2$. Thanks to Eq.~\eqref{eq:chain}, this reduces to the
same question for $\varkappa: V\times V^*\to \ov{\Ima\varkappa}\simeq Y$.

It is easily seen that if $(v,\xi)\in \ov{\Ima\varkappa}=Y$ and $v\in V_{reg}$,
then $(v,\xi)\in \Ima\varkappa$. Invoking the projection $p:Y\to V$ shows that 
$p^{-1}(V_{reg})$ is on open subset lying in $\Ima\varkappa$.
Since $V$ has the codim--$2$ property, we conclude, using Theorem~\ref{thm:main4-1}(iv),
that  the complement of $p^{-1}(V_{reg})$ is of codimension $\ge 2$.
\vskip.7ex
This completes the proof of Theorem~\ref{thm:main4-2}.
\end{proof}

\begin{cl}  
{\sf (i)} \ 
$\eus A=\eus C$; Moreover, $\eus C$ is an Auslander-Gorenstein, Cohen-Macaulay,
Noetherian domain and a maximal order;
{\sf (ii)} \  
the centre of\/ $\eus C$ is $\bbk[V]^G$.
\end{cl}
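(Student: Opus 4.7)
The plan is to derive both parts from Theorem~\ref{thm:main4-2}, which identifies $\gr\eus A=\gr\eus C\simeq\Sym_{\bbk[V]}(E)=\bbk[Y]$, together with the structural properties of $Y$ provided by Theorem~\ref{thm:main4-1}. All three of $\eus A\subset\eus C\subset\ml D(V)$ inherit the order filtration from $\ml D(V)$, which is separated and exhaustive.

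For the equality $\eus A=\eus C$, I would use a standard induction on filtration degree. Given $D\in\eus C$ of order $n$, its principal symbol lies in $\gr_n\eus C=\gr_n\eus A$, so there is $D'\in\eus A$ of order $\le n$ with the same symbol; then $D-D'\in\eus C$ has order strictly less than $n$, and the induction closes.

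Next I would lift the structural properties from $\gr\eus C=\bbk[Y]$ to $\eus C$ itself. By Theorem~\ref{thm:main4-1}, $\bbk[Y]$ is a Noetherian factorial complete intersection, hence Cohen-Macaulay, Gorenstein, and a normal domain. The standard filtered-to-graded machinery (Bj\"ork's theorems on Auslander-Gorenstein and Cohen-Macaulay filtered rings) then gives that $\eus C$ is a Noetherian Auslander-Gorenstein, Cohen-Macaulay domain. For the maximal-order assertion I would invoke the lifting theorem of Maury--Raynaud: a $\BN$-filtered algebra whose associated graded ring is a Noetherian normal (in particular, factorial) domain is a maximal order in its quotient division ring.

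For (ii), $\bbk[V]^G$ obviously sits in $Z(\eus C)$. Conversely, take $D\in Z(\eus C)$. Since $D$ commutes with every element of $\bbk[V]\subset\eus C$, and the centraliser of $\bbk[V]$ in $\ml D(V)$ coincides with $\bbk[V]$ itself (bracketing with coordinate functions kills all derivatives of positive order), one has $D\in\bbk[V]$. Commutation of $D$ with every adjoint vector field $\boldsymbol{\varsigma}(x)$ means $\boldsymbol{\varsigma}(x)\{D\}=0$ for all $x\in\g$, so $D$ is $\g$-invariant, hence $G$-invariant since $G$ is connected; therefore $D\in\bbk[V]^G$. The main obstacle is the maximal-order claim, which requires careful invocation of a non-commutative lifting theorem adapted to factorial associated graded rings; the remaining statements are essentially formal consequences of Theorems~\ref{thm:main4-1} and~\ref{thm:main4-2}.
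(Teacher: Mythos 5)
Your proposal is correct and follows essentially the same route as the paper, which simply defers to the proof of Corollary~3.3 in \cite{ls}: there, too, $\eus A=\eus C$ is deduced from $\gr\eus A=\gr\eus C$ by induction on the order filtration, the Auslander--Gorenstein, Cohen--Macaulay and maximal-order properties are lifted from the fact that $\gr\eus C\simeq\Sym_{\bbk[V]}(E)$ is a Noetherian Gorenstein (complete intersection) normal domain via the standard filtered-to-graded theorems, and the centre is computed exactly by your two-step argument (centralising $\bbk[V]$ forces $D\in\bbk[V]$, centralising $\boldsymbol{\varsigma}(\g)$ forces $G$-invariance). No gaps; the details you supply are the ones implicit in the citation.
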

(See the proof of Corollary~3.3 in \cite{ls}.)

\noindent 
Levasseur and Stafford also prove that, in their situation, both $\eus C$ and $\gr\eus C=
\Sym_{\bbk[V]}(E)$ 
are free modules over $\bbk[\g]^G$, see \cite[Corollary~3.4]{ls}. We return to this question 
below.

There is a particular case of Theorem~\ref{thm:main4-2},
where the assumptions simplify considerably.
\begin{prop}   \label{prop:loc_free}
Suppose that 
\begin{itemize}
\item[\sf 1.]  \ $\bbk[V]^G$ is a polynomial algebra freely generated by $f_1,\dots,f_l$;
\item[\sf 2.]  \   $V$ has the codim--$2$ property and  $\{(\textsl{d}f_i)_v\}$ are 
linearly independent for any $v\in V_{reg}$;
\item[\sf 3.] \  there is $v\in V$ such that $\g_v=0$.
\end{itemize}
Then $Y=\ov{\Ima\varkappa}=V\times \g^*$ and $\eus A=\eus C$.
\end{prop}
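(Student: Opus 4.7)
The plan is to derive Proposition \ref{prop:loc_free} directly from Theorem \ref{thm:main4-2} (and its corollary), by observing that assumption 3 forces the machinery to degenerate. The single crucial remark is that $m := \min_{v\in V}\dim\g_v = 0$, since some $v$ has $\g_v=0$; consequently $\rk(\Ker\hat\phi)=0$ and the defining equations of $Y$ disappear.

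First I would show $\Ker\hat\phi=0$. The subset $U=\{v\in V\mid \g_v=0\}$ is Zariski open by upper semicontinuity of the stabilizer dimension, and nonempty by hypothesis, hence dense. Any $F\in\Ker\hat\phi$ satisfies $F(v)\in\g_v$ for all $v$, so $F$ vanishes identically on $U$ and therefore on $V$. Thus $\Ker\hat\phi$ is trivially a free $\bbk[V]$-module (of rank $0$), which verifies hypothesis 2 of Theorem \ref{thm:main4-2}. Hypotheses 1 and 3 of that theorem are given by assumptions 1 and 2 of the proposition. Hypothesis 4, the Fitting condition $(\mathcal F_2)$, requires $\hot I_t(\hat\beta)\ge m-t+1+2$ for $1\le t\le m=0$; since the range of $t$ is empty, the condition holds vacuously.

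Next I would identify $Y$ and $\ov{\Ima\varkappa}$. With $m=0$ there are no defining equations, so $Y=V\times\g^*$. For the moment map side, the linear map $V^*\to\g^*$, $\xi\mapsto\mu(v,\xi)$ has image $\g_v^{\perp}\subset\g^*$; for $v\in U$ this equals $\g^*$, so $\varkappa(U\times V^*)=U\times\g^*$ is dense in $V\times\g^*$. Hence $\ov{\Ima\varkappa}=V\times\g^*=Y$, as predicted by Theorem \ref{thm:main4-1}(iii) in this degenerate case.

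With all the hypotheses of Theorem \ref{thm:main4-2} in place, its conclusion yields $\gr\eus A=\gr\eus C\simeq \Sym_{\bbk[V]}(E)$, and the corollary immediately following it gives $\eus A=\eus C$. I do not anticipate any genuine obstacle: the entire content of the proposition is recognizing that under assumption 3 the kernel vanishes, the determinantal condition is vacuous, and $Y$ fills the ambient space, so the proof reduces to bookkeeping rather than to any new geometric or homological input.
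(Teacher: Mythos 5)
Your proposal is correct and follows essentially the same route as the paper, whose entire proof is the observation that hypothesis 3 forces $\Ker\hat\phi=0$, making the freeness hypothesis and condition $(\mathcal F_2)$ of Theorem~\ref{thm:main4-2} vacuous. Your additional details (density of $\{v\mid \g_v=0\}$, surjectivity of $\xi\mapsto\mu(v,\xi)$ for such $v$, hence $\ov{\Ima\varkappa}=V\times\g^*$) are accurate fillings-in of what the paper leaves implicit.
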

\begin{proof}
Indeed, here $\Ker\hat\phi=0$ and condition $(\mathcal F_2)$ becomes 
vacuous. 
\end{proof}

\noindent
Verification of condition $(\mathcal F_2)$ is the most difficult part in possible applications
of Theorem~\ref{thm:main4-2}. 
In the rest of the section, we describe a  geometrical
approach to it (cf. similar approach in \cite[\S\,8]{p05}). Let us assume that 
the first two hypotheses of that theorem are satisfied.
In particular, $\Ker\hat\phi$ is a free  module (of rank $m$).

Using the basis morphisms $F_i:V\to\g$, define the stratification of $V$ as follows:
\[
   {\eus X}_i =\{ v\in V\mid \dim \mathsf{span}\{F_1(v),\ldots,
F_m(v)\}\le m-i\} \ .
\]
Then $\eus X_{i+1}\subset \eus X_{i}$ and $\eus X_0=V$. As the ideal
$I_t(\hat\beta)$ defines $\eus X_{m-t+1}$, condition $(\mathcal F_2)$ precisely
means that $\dim\eus X_i\le \dim V-i-2$ for any $i\ge 1$. In particular,
$\codim\, \eus X_{1}\ge 3$. In case of the coadjoint representation of a $3$-wonderful
Lie algebra, this becomes just the {\sl codim}--$3$ condition on the set of non-regular points,
which is used in the proof of Theorem~\ref{ala}.

Consider the quotient map $\pi_V: V\to V\md G\simeq \mathbb A^l$.
As usual, we say that $\pi_V^{-1}(\pi_V(0))=:\eus N=\eus N_V$ 
is the {\it null-cone\/} of (the $G$-module) $V$. Set $\eus X_i(\eus N):=\eus X_i\cap\eus N$.
Sometimes, the study of $\{\eus X_i\}$ can be reduced to that of $\{\eus X_i(\eus N)\}$. 

Recall that $\Ker\hat\phi$ is a $G$-stable submodule of $\Mor(V,\g)$. Therefore if
$F_1,\dots,F_m$ is a basis of $\Ker\hat\phi$, then $\{g\ast F_i\}_i$ is another basis
for any $g\in G$. It is not clear a priori that the $F_i$'s should be
$G$-equivariant. Consequently, subvarieties $\eus X_i$ are not necessarily $G$-stable.
However, in all known examples the freeness of $\Ker\hat\phi$ does mean that there is a basis
consisting of $G$-equivariant morphisms. (Cf. Remark~\ref{essence} and Theorem~\ref{thm:z2-free} below). For this reason, we wish to assume that $F_i\in \Mor_G(V,\g)$.

\begin{prop}    \label{pr:visib}
Under the first two assumptions of Theorem~\ref{thm:main4-2}, 
suppose that  a generic fibre of $\pi_V$ is a (closed) $G$-orbit,  
$\eus N$ contains finitely many $G$-orbits, and each $F_i$ lies in $\Mor_G(V,\g)$.  If 
\\[.6ex]
\hbox to \textwidth{\ 
$(\clubsuit)$ \hfil $\codim_{\eus N} \eus X_i(\eus N) \ge i+1$ for any \ $i\ge 1$, \hfil }
\\[.6ex]
then $(\mathcal F_2)$ is satisfied. 
\\ An equivalent form of
condition $(\clubsuit)$ is that  $\eus N_{reg}\subset \eus X_0(\eus N)$ and, 
for any 
$v\in \eus N\setminus \eus N_{reg}$, 
\[  
    \dim(\mathsf{span}\{ F_i(v) \mid i=1,\dots,m \})+\codim_{\eus N} G{\cdot}v \ge m+1 .
\]  
\end{prop}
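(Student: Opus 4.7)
\textit{Proof plan.}
The target is $\codim_V \eus X_i \ge i+2$ for every $i \ge 1$, which is exactly $(\mathcal F_2)$; I will reach it by analysing each irreducible component $W$ of $\eus X_i$ separately. Pick the $F_j$'s to be a homogeneous basis of the free graded module $\Ker\hat\phi$; since they are also $G$-equivariant by hypothesis, each $\eus X_i$ is both conical and $G$-stable in $V$. Connectedness of $G\times\bbk^*$ then forces every component $W$ to be $G$-stable, conical, and to contain $0$. Note also that $\eus X_i\subsetneq V$ for $i\ge 1$: the injection $\hat\beta\colon \bbk[V]^m\hookrightarrow \bbk[V]^n$ has generic rank $m$, so the $F_j(v)$'s are linearly independent at generic $v$.

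The first key step will be to show $\dim\pi_V(W)\le l-1$. If $\pi_V(W)$ were dense in $V\md G$, then for $\eta$ in the dense open locus where $\pi_V^{-1}(\eta)$ is a single closed $G$-orbit, the intersection $W\cap\pi_V^{-1}(\eta)$ would be a nonempty closed $G$-stable subset of that orbit, hence the full fibre; closing up would give $W=V$, contradicting properness. The second key input is $\dim\eus N=\dim V-l$: the generic-orbit hypothesis yields $\max_v\dim G{\cdot}v=\dim V-l$, the finite-orbit hypothesis on $\eus N$ forces $\dim\eus N\le\dim V-l$, and semicontinuity of fibre dimension supplies the reverse inequality.

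With these in hand, the codimension bound is a routine dimension estimate. Apply the fibre-dimension theorem to $\pi_V|_W\colon W\to\ov{\pi_V(W)}$, combined with upper semicontinuity (valid since $0\in\pi_V(W)$ by conicality); this yields $\dim W\le\dim\pi_V(W)+\dim(W\cap\eus N)$. Converting to codimensions and using $W\cap\eus N\subset\eus X_i(\eus N)$ together with $(\clubsuit)$ gives
\[
  \codim_V W\,\ge\,\bigl(l-\dim\pi_V(W)\bigr)+\codim_{\eus N}(W\cap\eus N)\,\ge\,1+(i+1)=i+2.
\]
Taking the maximum over components of $\eus X_i$ establishes $(\mathcal F_2)$.

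For the equivalent reformulation: since $\eus N$ is a finite union of $G$-orbits, $\eus X_i(\eus N)$ is a union of orbit closures and $\codim_{\eus N}\eus X_i(\eus N)$ equals the minimum codimension of a contained orbit. Writing $d(v):=\dim\mathsf{span}\{F_j(v)\}$ and specialising to $i=m-d(v)$ translates $(\clubsuit)$ into the inequality $d(v)+\codim_{\eus N} G{\cdot}v\ge m+1$ for every $v\in\eus N$ with $d(v)<m$; for $v\in\eus N_{reg}$ the codimension is $0$, so the inequality forces $d(v)=m$ and recovers the trivial inclusion $\eus N_{reg}\subset\eus X_0(\eus N)$. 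The main obstacle is the first key step: the conjunction of conicality, $G$-stability, and the single-orbit structure of generic fibres is precisely what prevents $W$ from dominating $V\md G$, and without it one would only recover $\codim_V W\ge\codim_{\eus N}(W\cap\eus N)$, losing the decisive $+1$.
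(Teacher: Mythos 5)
Your argument is correct and follows essentially the same route as the paper's: you bound $\dim\eus X_i$ via the projection to $V\md G$, using that the fibre of $\pi_V\vert_{\eus X_i}$ over $\pi_V(0)$ is $\eus X_i(\eus N)$, that $\dim\eus N=\dim V-l$ (from orbit finiteness and the fibre-dimension theorem), and that $G$-stability of the $\eus X_i$ together with single-orbit generic fibres forces $\ov{\pi_V(\eus X_i)}$ to be a proper subvariety of $V\md G$, which yields the decisive extra $+1$; the reformulation of $(\clubsuit)$ via orbit codimensions is also as intended. Working component-by-component and proving $\dim\eus N=\dim V-l$ directly (where the paper cites Brion) are only cosmetic differences.
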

\begin{proof} The finiteness assumption guarantees us that $\dim\eus N=\dim V-\dim V\md G$
and $\eus N_{reg}\subset V_{reg}$.  Furthermore, all the fibres of $\pi_V$ are of dimension 
$\dim V-\dim V\md G$, see e.g. \cite[Prop.\,6 in p.146]{brion} 
(the reductivity of $G$ is not needed in this place). By Chevalley's theorem, $\pi_V$ is an
open map. Consequently, it is onto.
By the assumption, each $\eus X_i$ is $G$-stable.
Let $\pi_{i,V}$ be the restriction of $\pi_V$ to $\X_i$. Then
$\eus X_i(\eus N)=\pi_{i,V}^{-1}(\pi_{i,V}(0))$. 
Therefore 
\[
    \dim\eus X_i\le \dim \eus X_i(\eus N)+ \dim \pi_V(\eus X_i)\le
    \dim \eus N-(i+1) + (\dim V\md G -1)=\dim V - (i+2).
\]
Here we used the fact that $\ov{ \pi_V(\eus X_i)}$ is a proper subvariety of $V\md G$ 
for $i\ge 1$. Indeed, $V\setminus \eus X_1$ is a dense open subset of $V$ and 
there is a dense open subset $\Xi \subset (V\setminus \eus X_1)$ such that
if $G{\cdot}v$ is a  fibre of $\pi_V$ for any $v\in\Xi$. 
The second part is an easy reformulation of condition $(\clubsuit)$, which uses the finiteness
for $G$-orbits in $\eus N$.
\end{proof}

\noindent
Recall that $v\in V_{reg}$ if and only if $\dim\g_v=m$.
Since $\codim_{\eus N}G{\cdot}v  =\dim \g_v -m$, yet another form of 
the above conditions is 
\begin{gather}  \label{eq:F3-orb}
  \dim(\mathsf{span}\{ F_i(v) \mid i=1,\dots,m \})+\dim\g_v = 2m   \text{ for any } \ 
  v\in \eus N_{reg},  \\  \label{eq:F4-orb}
  \dim(\mathsf{span}\{ F_i(v) \mid i=1,\dots,m \})+\dim\g_v \ge 2m+1   \text{ for any } \ 
  v\in\eus N\setminus \eus N_{reg} .
\end{gather}

\begin{rmk}   \label{rmk:sgp-abelian}
If $\Ker\hat\phi$ is a free $\bbk[V]$-module generated by $G$-equivariant morphisms, then
a generic stabiliser for $(G:V)$ is commutative. Indeed, the $G$-equivariance implies
that $F_i(v)$ lies in the centre of $\g_v$ for any $v\in V$.
On the other hand, if $v$ is generic, then $F_1(v),\dots,F_m(v)$ form a basis for $\g_v$.
(Cf. Remark~3.2 in \cite{coadj}.)
\end{rmk}

The above inequality $(\clubsuit)$ is crucial for establishing $(\mathcal F_2)$ in  
applications. Furthermore,  it essentially implies that 
$\eus C$ to be a free $\bbk[V]^G$-module.

\begin{thm}   \label{thm:C_is_free} Suppose that
\begin{itemize}
\item[\sf 1.]  \ $\bbk[V]^G$ is a polynomial algebra freely generated by $f_1,\dots,f_l$;
\item[\sf 2.]  \   $V$ has the codim--$2$ property and  $\{(\textsl{d}f_i)_v\}$ are 
linearly independent for any $v\in V_{reg}$;
\item[\sf 3.]  \ $\Ker\hat\phi$ is a free $\bbk[V]$-module generated $G$-equivariant morphisms
$F_1,\dots,F_m$;
\item[\sf 4.]  \  a generic fibre of $\pi_V$ is a (closed) $G$-orbit;  
\item[\sf 5.]  \  $\eus N$ contains finitely many $G$-orbits;
\item[\sf 6.]  \   $\codim_{\eus N} \eus X_i(\eus N) \ge i+1$ for any \ $i\ge 1$.
\end{itemize}
Then $\gr\eus A=\gr\eus C=\mathsf{Sym}_{\bbk[V]}(\Ima\hat\phi)$, $\eus A=\eus C$, and 
both $\eus C$ and $\gr\eus C$ are free (left or right) $\bbk[V]^G$-modules.
\end{thm}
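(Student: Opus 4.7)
The plan is to reduce the first two assertions to Theorem~\ref{thm:main4-2} and then establish freeness of $\eus C$ and $\gr\eus C$ by verifying that $f_1,\dots,f_l$ form a regular sequence in $\bbk[Y]$.

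\textbf{Reduction to Theorem~\ref{thm:main4-2}.} The $G$-equivariance of $F_1,\dots,F_m$ (hypothesis~3) forces each subvariety $\eus X_i$ to be $G$-stable, so hypothesis~6 is exactly condition $(\clubsuit)$ of Proposition~\ref{pr:visib}. Together with hypotheses~4 and~5, Proposition~\ref{pr:visib} yields that $(\mathcal F_2)$ holds for $E=\Ima\hat\phi$; all four hypotheses of Theorem~\ref{thm:main4-2} are then in force, and I conclude $\gr\eus A=\gr\eus C\simeq\Sym_{\bbk[V]}(E)=\bbk[Y]$ together with $\eus A=\eus C$.

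\textbf{Freeness of $\gr\eus C=\bbk[Y]$ via a regular sequence.} By Theorem~\ref{thm:main4-1}(ii), $\bbk[Y]$ is a Cohen--Macaulay domain of Krull dimension $\dim Y=\dim V+\dim\g-m$. I will show that the basis invariants $f_1,\dots,f_l$ form a regular sequence in $\bbk[Y]$, which, by Cohen--Macaulayness, is equivalent to the dimension equality
\[
  \dim\bbk[Y]/(f_1,\dots,f_l)\bbk[Y]=\dim Y-l,
\]
i.e.\ $\dim\rho^{-1}(0)=\dim Y-l$ where $\rho=\pi_V\circ p\colon Y\to V\md G$. The fiber $\rho^{-1}(0)=p^{-1}(\eus N)$ stratifies as $\bigsqcup_{i\ge 0} p^{-1}\bigl(\eus X_i(\eus N)\setminus\eus X_{i+1}(\eus N)\bigr)$; on the $i$-th stratum the $p$-fiber is $\mathsf{span}\{F_j(v)\}^{\perp}\subset\g^*$, of dimension $\dim\g-(m-i)$. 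Hypothesis~6 gives $\dim\eus X_i(\eus N)\le\dim\eus N-(i+1)$ for $i\ge 1$, so each stratum with $i\ge 1$ contributes at most
\[
  \bigl(\dim\eus N-(i+1)\bigr)+\bigl(\dim\g-m+i\bigr)=\dim\eus N+\dim\g-m-1,
\]
while the $i=0$ stratum contributes precisely $\dim\eus N+\dim\g-m=\dim Y-l$. Here I use that all fibers of $\pi_V$ have dimension $\dim\eus N=\dim V-l$, which follows from hypothesis~5 as recorded in the proof of Proposition~\ref{pr:visib}. Thus $\dim\rho^{-1}(0)=\dim Y-l$, the sequence $(f_1,\dots,f_l)$ is regular in the Cohen--Macaulay ring $\bbk[Y]$, and so $\bbk[Y]$ is a flat graded $\bbk[V]^G$-module. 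Graded Nakayama over the positively graded polynomial ring $\bbk[V]^G$ upgrades flatness to freeness: lift a homogeneous $\bbk$-basis of $\bbk[Y]/\bbk[V]^G_+\bbk[Y]$ and check it is a free generating system.

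\textbf{Freeness of $\eus C$.} A standard filtered-lifting argument transfers freeness from $\gr\eus C$ to $\eus C$. Since $\bbk[V]^G\subset\eus C$ sits in filtration degree~$0$ and coincides with its own image in $\gr\eus C$, I pick homogeneous $\bar c_\alpha\in\gr\eus C$ forming a free $\bbk[V]^G$-basis and lift each to $c_\alpha\in\eus C$ of the same filtration degree. An induction on the order of differential operators shows that $\{c_\alpha\}$ spans $\eus C$, while comparing top-order symbols of any putative $\bbk[V]^G$-linear relation among the $c_\alpha$ forces it to be trivial modulo the freeness of $\gr\eus C$.

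\textbf{Main obstacle.} The technical heart is the stratification bookkeeping for $\rho^{-1}(0)$: hypothesis~6 is designed precisely so that the deeper strata $\eus X_i(\eus N)$ for $i\ge 1$ lose codimension fast enough to be dominated by the $i=0$ stratum after passing to perpendicular $p$-fibers. Once this dimension equality is secured, the Cohen--Macaulayness granted by Theorem~\ref{thm:main4-1}(ii) converts it into the regular-sequence/flatness statement, and the remaining steps (graded Nakayama and filtered lifting) are essentially formal.
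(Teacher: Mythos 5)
Your proposal is correct and follows essentially the same route as the paper: reduce everything except freeness to Theorem~\ref{thm:main4-2} via Proposition~\ref{pr:visib}, obtain the dimension bound for $p^{-1}(\eus N)$ from the stratification by the $\eus X_i(\eus N)$ and hypothesis~6, use the Cohen--Macaulayness of $\bbk[Y]=\gr\eus C$ to convert this into flatness/freeness over $\bbk[V]^G$, and finally transfer freeness to $\eus C$ by the standard filtered-lifting argument (the paper simply cites Corollary~3.4 of \cite{ls}). The only cosmetic difference is that you package the flatness step as a regular-sequence criterion at the zero fibre, whereas the paper deduces that $\nu\colon Y\to V\md G$ is equidimensional and invokes flatness of an equidimensional morphism from a Cohen--Macaulay variety to a smooth base; in this graded setting the two formulations are equivalent.
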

\begin{proof}
As the hypotheses imply condition $(\mathcal F_2)$ for $E=\Ima\hat\phi$, only the last assertion
requires a proof.

Recall that  $\bbk[Y]=\gr\eus C$ and 
$Y=\ov{\Ima(\varkappa)}$ is a complete intersection  of codimension $m$ in 
$V\times\g^*$.
Consider the map $\nu: Y\to V\to V\md G\simeq \mathbb A^l$,
the composition of the projection and the quotient morphism $\pi_{V}$.
Its fibre over the origin is 
\[
\eus Z:=\{(v,\xi) \mid v\in \eus N \ \ \& \ \   \langle F_i(v), \xi\rangle=0, \ i=1,\dots,m\} .
\]
We wish to prove that $\eus Z$ is a variety of pure dimension  $\dim Y- l$.  
Since $\eus Z$ is a fibre
of a dominant map $Y\to \mathbb A^l$,  we have $\dim\eus Z\ge \dim Y-l$.
On the other hand, consider the projection $p: \eus Z \to \eus N$.  It follows from 
hypothesis {\sf 6} that  
\[
\dim p^{-1}( \eus X_i(\eus N)\setminus  \eus X_{i+1}(\eus N)) \le \dim\eus N-i -1 + \dim\g -m+i=
 \dim Y- l -1 
\]  
for any $i\ge 1$.  Hence $\eus Z=\ov {p^{-1}(\eus N\setminus  \eus X_{1}(\eus N))}$ is of pure dimension $\dim Y-l$. [Furthermore, it is not hard to prove that $p$ provides a one-to-one 
correspondence between 
the irreducible components of $\eus Z$ and $\eus N$.]
Consequently, $\nu$ is equidimensional.
Since $Y$ is Cohen-Macaulay, the $\nu$ is also flat. This shows that each 
$(\gr\eus C)_n$ is a flat graded finitely generated module over the polynomial ring
$\bbk[V]^G$, hence a free module. Thus, $\gr\eus C$ is a free $\bbk[V]^G$-module.

The assertion on $\eus C$ can be proved exactly as in Corollary\,3.4 in \cite{ls}.
\end{proof}

\begin{rmk}
Conditions $(\mathcal F_d)$ can also be considered for $\Ima\hat\psi$ or 
$\Ima\hat\tau$  whenever
$\Ker\hat\psi$ is a free $\bbk[V]$-module or $\Ker\hat\tau$ is a free $\bbk[\g]$-module.
If $(\mathcal F_2)$ is satisfied, then one obtains a similar description for the image of
$\varkappa_\psi : V\times \g\to V\times V$, $(v,x)\mapsto (v, x{\cdot}v)$ or
$\varkappa_\tau : \g\times V\to \g\times V$, $(x,v)\mapsto (x, x{\cdot}v)$, see 
Theorems\,8.8 and 8.11 in \cite{p05}. However such descriptions seem to have no 
non-commutative counterparts.
\end{rmk}

\section{Applications: isotropy representations of symmetric pairs and beyond}  
\label{sect:z2}

\noindent 
In this section,  $G$ is a connected {\sl semisimple\/} algebraic group.
If $\g=\g_0\oplus\g_1$ is a 
$\BZ_2$-grading of $\g$, then $(\g,\g_0)$ is said to be a {\it symmetric pair}.
Let $G_0$ be the connected subgroup of 
$G$ with Lie algebra $\g_0$. Our goal is to describe a class of $\BZ_2$-gradings
that lead to isotropy representations $(G_0:\g_1)$ satisfying the assumptions of 
Theorems~\ref{thm:main4-2} and \ref{thm:C_is_free}.

\noindent  Recall necessary results on 
the representation $(G_0:\g_1)$. The standard reference for this is \cite{kr71}.
Let $\N$ denote the set of nilpotent elements of $\g$. 

\begin{itemize}
\item[$\langle 1\rangle$]\  Any $v\in\g_1$ admits a unique decomposition $v=v_s+v_n$, where $v_s\in\g_1$ is 
semisimple and $v_n\in \N\cap\g_1$; $v=v_s$ if and only if $G_0{\cdot}v$ is closed;
$v=v_n$ if and only if the closure of $G_0{\cdot}v$ contains the origin.
For any $v\in\g_1$, there is the induced $\BZ_2$-grading of the centraliser 
\ $\g_v=\g_{0,v}\oplus \g_{1,v}$, and $\dim\g_0-\dim\g_1=\dim\g_{0,v}-\dim\g_{1,v}$.

\item[$\langle 2\rangle$]\  Let $\ce\subset\g_1$  be a maximal subspace consisting of pairwise commuting
semisimple elements. 
All such subspaces are $G_0$-conjugate and $G_0{\cdot}\ce$ is dense in 
$\g_1$; $\dim\ce$ is called the {\it rank\/} of the $\BZ_2$-grading or pair $(\g,\g_0)$, 
denoted $\rk(\g,\g_0)$.  
If $v\in\ce\cap(\g_1)_{reg}$, then $\g_{1,v}=\ce$ and $\g_{0,v}$ is a generic stabiliser
for the $G_0$-module $\g_1$.

\item[$\langle 3\rangle$]\  The algebra $\bbk[\g_1]^{G_0}$ is polynomial and $\dim\g_1\md G_0=\rk(\g,\g_0)$. 
The quotient map $\pi_{\g_1}: \g_1\to \g_1\md G_0$ is equidimensional. The generic fibre is
the orbit of a  $G_0$-regular semisimple element.
Each fibre of $\pi_{\g_1}$ contains finitely many $G_0$-orbits and each closed $G_0$-orbit in
$\g_1$ meets $\ce$. The null-cone in $\g_1$ equals $\N\cap\g_1$.

\item[$\langle 4\rangle$] \ If $v\in (\g_1)_{reg}$ and $f_1,\dots,f_{\dim\ce}\in \bbk[\g_1]^{G_0}$ are basis invariants, 
then the $\{(\textsl{d}f_i)_v\}_i$ are linearly independent.
\end{itemize}
A $\BZ_2$-grading (or a symmetric pair $(\g,\g_0)$) is said to be $\N$-{\it regular\/} if $\g_1$ 
contains a regular nilpotent element of $\g$. By a result of Antonyan~\cite{an}, 
a $\BZ_2$-grading is  $\N$-regular if and only if $\g_1$ contains a regular
semisimple element. Then $\dim\g_0-\dim\g_1=\rk\g-2\dim\ce$. 
\\ Now we are interested in properties  of 
\begin{equation}     \label{ed:A_g1}
0\to \Ker\hat\phi \to \Mor(\g_1,\g_0)\overset{\hat\phi}{\to} \Mor(\g_1,\g_1)  
\end{equation}
for $\N$-regular $\BZ_2$-gradings. Item $\langle 2\rangle$ 
above shows that a generic stabiliser for $(G_0{:}\g_1)$ is commutative
if and only if $\ce$ contains a regular semisimple element. Therefore
a $\BZ_2$-grading
is $\N$-regular if and only if a generic stabiliser for $(G_0{:}\g_1)$ is commutative.
In view of Remark~\ref{rmk:sgp-abelian}, these are the only $\BZ_2$-gradings, where 
one could expect that $\Ker\hat\phi$ is generated by $G_0$-equivariant morphisms.
The following  is  \cite[Theorem\,5.8]{coadj}.

\begin{thm}    \label{thm:z2-free}
Suppose the $\BZ_2$-grading $\g=\g_0\oplus\g_1$ is $\N$-regular. 
Then 
$\Ker\hat\phi$ is a free $\bbk[\g_1]$-module
generated by $G_0$-equivariant morphisms. In this case
$\rk(\Ker\hat\phi)=\rk\g-\dim\ce$. 
\end{thm}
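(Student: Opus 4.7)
The plan is to exhibit $m:=\rk\g-\dim\ce$ explicit $G_0$-equivariant morphisms lying in $\Ker\hat\phi$ and to show that they freely generate the module; the rank assertion will drop out along the way.

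First I would compute $\rk(\Ker\hat\phi)=\min_{v\in\g_1}\dim\g_{0,v}$. Using $\N$-regularity and Antonyan's criterion, pick $v\in\ce$ that is regular semisimple in $\g$; then $\g_v$ is a $\sigma$-stable Cartan subalgebra $\h$ of $\g$ containing $\ce$, and the $\sigma$-eigenspace decomposition gives $\h=(\h\cap\g_0)\oplus\ce$, so $\dim\g_{0,v}=\rk\g-\dim\ce$. Since the minimum is attained on the open set $(\g_1)_{reg}$, the rank of $\Ker\hat\phi$ equals $m$.

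Next, I would build $G_0$-equivariant candidates from invariants of the ambient group. Choose basic homogeneous generators $P_1,\dots,P_r$ of $\bbk[\g]^G$ ($r=\rk\g$) as eigenvectors of the $\sigma$-action, which is possible because $\sigma$ normalizes $G$. The Killing-form gradients $\nabla P_i:\g\to\g$ are $G$-equivariant and satisfy $\nabla P_i(x)\in\g_x$. Writing $\nabla P_i(v)=\Phi_i(v)+\Psi_i(v)$ along $\g_0\oplus\g_1$ for $v\in\g_1$, the component $\Phi_i$ automatically lies in $\Mor_{G_0}(\g_1,\g_0)\cap\Ker\hat\phi$. Invoking the Kostant--Rallis isomorphism $\bbk[\g_1]^{G_0}\simeq\bbk[\ce]^{W(\ce)}$, I would arrange the $P_i$'s so that exactly $\dim\ce$ of them restrict to generators of $\bbk[\g_1]^{G_0}$ while the remaining $m$, say $P_1,\dots,P_m$, vanish upon restriction to $\ce$ (equivalently, restrict into the augmentation ideal of $\bbk[\h\cap\g_0]$). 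A direct check at a generic $v\in\ce$ then shows that $\Phi_1(v),\dots,\Phi_m(v)$ span $\g_{0,v}=\h\cap\g_0$, so these morphisms are pointwise linearly independent on $(\g_1)_{reg}$.

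Finally, I would conclude freeness as follows. The $\bbk[\g_1]$-linear map $\psi:\bigoplus_{i=1}^m\bbk[\g_1]\cdot e_i\to\Ker\hat\phi$ sending $e_i\mapsto\Phi_i$ is injective by the generic independence just established, and is an isomorphism on $(\g_1)_{reg}$ because both source and target are locally free of rank $m$ there. Since $\Ker\hat\phi$ is the kernel of a map of free modules over the regular ring $\bbk[\g_1]$, it is reflexive; combined with the codim-$2$ property of $(\g_1)_{reg}$ (which is valid in the $\N$-regular case), this forces $\psi$ to be surjective globally, completing the proof. The main obstacle I anticipate is the combinatorial step of producing the correct partition of the basic invariants $P_i$: one must confirm, via the $\N$-regular identity $\dim\g_0-\dim\g_1=\rk\g-2\dim\ce$ and a Chevalley-type Hilbert-series comparison, that precisely $m$ of the $P_i$'s can be arranged to vanish on $\ce$, so that the rank count matches on the nose.
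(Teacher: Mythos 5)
Your construction is in substance the paper's own (the paper quotes the result from \cite{coadj} and then recalls exactly this recipe): the generators of $\Ker\hat\phi$ are the maps $v\mapsto(\textsl{d}f_i)_v$ attached to basis invariants $f_1,\dots,f_m$ of $\bbk[\g]^G$ that vanish on $\g_1$, equivalently the bi-homogeneous components of degree $(1,\deg f_i-1)$; your rank count $\rk(\Ker\hat\phi)=\rk\g-\dim\ce$ via $\dim\g_0-\dim\g_1=\rk\g-2\dim\ce$ is also correct, as is the observation that the $\g_0$-component of a gradient of any $G$-invariant lies in $\Mor_{G_0}(\g_1,\g_0)\cap\Ker\hat\phi$. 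However, there are two genuine gaps. First, the step you yourself flag as "the main obstacle" is not a routine Hilbert-series comparison: the existence of a system of homogeneous generators of $\bbk[\g]^G$ of which exactly $m=\rk\g-\dim\ce$ vanish identically on $\g_1$ (equivalently, that the restriction $\bbk[\g]^G\to\bbk[\g_1]^{G_0}$ behaves as you assert) is precisely the nontrivial structural input; in the paper it is supplied by the theorem that $Z=\ov{G{\cdot}\g_1}$ is a complete intersection whose ideal is generated by basis invariants (\cite[Theorem\,4.7]{theta}), and this is where $\N$-regularity does real work. A subalgebra of a polynomial ring of Krull dimension $\dim\ce$ need not be generated by $\dim\ce$ elements, so the partition of the basic invariants cannot be extracted from dimension or degree counts alone; without this input your candidate morphisms $\Phi_1,\dots,\Phi_m$ are not even known to exist.

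Second, the passage to freeness is flawed as written: "injective and both sides locally free of rank $m$ on $(\g_1)_{reg}$, hence an isomorphism there" is a non sequitur (multiplication by a nonunit on a free module is a counterexample). What you actually need is that $\Phi_1(v),\dots,\Phi_m(v)$ are linearly independent at every $v\in(\g_1)_{reg}$ (or at least off a set of codimension $\ge 2$), so that Nakayama gives surjectivity of $\psi$ at all height-one primes; you verified independence only at generic semisimple points of $\ce$, and a priori the degeneracy locus could contain a divisor, which would exactly destroy the reflexivity argument. This can be repaired: for an $\N$-regular grading, any $v\in(\g_1)_{reg}$ has $\dim\g_{0,v}=\rk\g-\dim\ce$ and $\dim\g_{1,v}=\dim\ce$, hence $\dim\g_v=\rk\g$ and $v$ is $\g$-regular; then Kostant's theorem on the differentials of the basis invariants at $\g$-regular points gives the required pointwise independence of the $(\textsl{d}f_i)_v$ on all of $(\g_1)_{reg}$ (this is the fact the paper invokes at the start of the proof of Theorem~\ref{thm:z2-F2}). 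With that inserted, your reflexivity-plus-codim-$2$ conclusion is sound, but as it stands the proposal does not prove the theorem.
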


\noindent 
Recall the construction of such a basis for $\Ker\hat\phi$. By \cite[Theorem\,4.7]{theta},
$Z=\ov{G{\cdot}\g_1}$ is a normal complete intersection in $\g$, $\codim\, Z=\rk\g-\dim\ce$,
and the ideal of $Z$ in $\bbk[\g]$ is generated by certain homogeneous 
basis invariants in $\bbk[\g]^G$. 
Let $f_1,\dots,f_m$ be such basis invariants ($m=\rk\g-\dim\ce$). 
As in Section~\ref{sect:diff_op}, any $F_i\in \Ker\hat\phi$ determines the polynomial
$\widehat F_i\in\bbk[\g_1\times\g_0^*]= \bbk[\g_1\times\g_0]\simeq\bbk[\g]$ and vice versa.
In this case, $\widehat F_i$ is defined to be the bi-homogeneous component of
$f_i$ of degree $(1,\deg f_i{-}1)$. (Here "1" is the degree with respect to $\g_0$.)
Since $\widehat F_i$ is clearly $G_0$-invariant, $F_i$ is $G_0$-equivariant.
From this description, it follows  that $(\textsl{d}f_i)_v=F_i(v)$ if $v\in\g_1$.
\\  \indent
For $\g$ simple, the list of $\N$-regular symmetric pairs consists of
symmetric pairs of maximal rank (when $\dim\ce=\rk\g$ and hence $\Ker\hat\phi=0$) 
and the following 4 cases:

\begin{tabbing}
\framebox{3} \= \quad  $(\mathfrak{so}_{2n}, \mathfrak{so}_{n-1}\dotplus\mathfrak{so}_{n+1})$, 
$n\ge 4$,\quad  \=  $m=\rk(\Ker\hat\phi)=1$;  \kill
\framebox{1} \> \quad $(\mathfrak{sl}_{2n}, \mathfrak{sl}_n\dotplus\mathfrak{sl}_n\dotplus \te_1)$
 \>
$m=\rk(\Ker\hat\phi)=n{-}1$;
\\
\framebox{2} \>  \quad $(\mathfrak{sl}_{2n+1}, \mathfrak{sl}_n\dotplus\mathfrak{sl}_{n+1}\dotplus \te_1)$  \>
$m=\rk(\Ker\hat\phi)=n$;
\\
\framebox{3} \> \quad  $(\mathfrak{so}_{2n}, \mathfrak{so}_{n-1}\dotplus\mathfrak{so}_{n+1})$, 
$n\ge 4$  \>
$m=\rk(\Ker\hat\phi)=1$;
\\
\framebox{4} \>  \quad $(\GR{E}{6}, \GR{A}{5}\times \GR{A}{1})$ \>  $m=\rk(\Ker\hat\phi)=2$.
\end{tabbing}
\begin{thm}     \label{thm:z2-F2}
Suppose the $\BZ_2$-grading $\g=\g_0\oplus\g_1$ is $\N$-regular, and
$(\g,\g_0)\ne (\mathfrak{sl}_{2n+1}, \mathfrak{sl}_n\dotplus\mathfrak{sl}_{n+1}\dotplus \te_1)$.
Then  inequality $(\clubsuit)$ holds for $\eus N_{\g_1}=\N\cap\g_1$ and therefore 
$(\mathcal F_2)$ is satisfied for\/ $\Ima\hat\phi$ in Eq.~\eqref{ed:A_g1}.
\end{thm}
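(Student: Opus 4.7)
The plan is to verify condition $(\clubsuit)$ of Proposition~\ref{pr:visib} orbit-by-orbit, in its equivalent form \eqref{eq:F3-orb}--\eqref{eq:F4-orb}. All other hypotheses of Proposition~\ref{pr:visib} are already in place for the isotropy action $(G_0:\g_1)$: item $\langle 3\rangle$ supplies the polynomial algebra $\bbk[\g_1]^{G_0}$, the equidimensional quotient, the closed generic fibre, and finiteness of $G_0$-orbits in $\eus N=\N\cap\g_1$; Theorem~\ref{thm:z2-free} provides a basis $F_1,\dots,F_m$ of $\Ker\hat\phi$ made of $G_0$-equivariant morphisms. The maximal-rank pairs are vacuous ($m=0$), so only the four $\N$-regular families listed just before the theorem need to be considered, and one is excluded.

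I would next exploit the explicit construction of the $F_i$ given after Theorem~\ref{thm:z2-free}: each $F_i$ comes from a homogeneous $f_i\in\bbk[\g]^G$ in a system of generators of the ideal of $Z=\ov{G\cdot\g_1}$. Since $f_i$ is $G$-invariant, $(\textsl{d}f_i)_v\in\g_v$ for $v\in\g$; evaluating at $v\in\g_1$ and using the $\BZ_2$-grading $\g_v=\g_{0,v}\oplus\g_{1,v}$, we see that $F_i(v)=\pi_0((\textsl{d}f_i)_v)\in\g_{0,v}$, and by Remark~\ref{rmk:sgp-abelian} in fact lies in the centre of $\g_{0,v}$. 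Thus the theorem reduces to verifying, for every $G_0$-orbit $G_0\cdot e\subset\eus N$, the inequality
\begin{equation*}
   \dim\mathsf{span}\{F_1(e),\dots,F_m(e)\}+\dim\g_{0,e}\ge 2m,
\end{equation*}
with strict inequality $\ge 2m+1$ when $e\notin\eus N_{reg}$.

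For the non-open orbits I would apply the Kostant-Rallis theory: any nilpotent $e\in\g_1$ embeds in a normal $\mathfrak{sl}_2$-triple $(e,h,\tilde f)$ with $h\in\g_0$ and $e,\tilde f\in\g_1$, and the weight decomposition of $\g=\g_0\oplus\g_1$ under $\ad h$ yields an explicit formula for $\dim\g_{0,e}$. The span of the $F_i(e)$ is controlled via the bi-homogeneous presentation $\widehat F_i(v,x)$ of degree $(1,\deg f_i-1)$ together with explicit models of the $f_i$: power sums of a matrix in type $\GR{A}{}$, power sums and Pfaffians in type $\GR{D}{}$, and the primitive invariants of degrees $2,5,6,8,9,12$ in $\GR{E}{6}$. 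On the open orbit, $\N$-regularity guarantees that $e$ can be taken to be a regular nilpotent of $\g$, whence $\dim\g_{0,e}=m$, and independence of $F_1(e),\dots,F_m(e)$ then follows from Kostant's theorem on the differentials of a full fundamental system of $G$-invariants at a regular element (into which $f_1,\dots,f_m$ extends).

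The hard part is the case-by-case verification for the three surviving families $(\mathfrak{sl}_{2n},\mathfrak{sl}_n\dotplus\mathfrak{sl}_n\dotplus\te_1)$ with $m=n{-}1$, $(\mathfrak{so}_{2n},\mathfrak{so}_{n-1}\dotplus\mathfrak{so}_{n+1})$ with $m=1$, and $(\GR{E}{6},\GR{A}{5}\times\GR{A}{1})$ with $m=2$. In the classical cases one parametrises nilpotent $G_0$-orbits in $\g_1$ by pairs of signed partitions and runs a combinatorial check comparing the $\mathfrak{sl}_2$-weight data of $\g_{0,e}$ against the ranks of the Pfaffian or power-sum maps; in the exceptional case one relies on the Vinberg-Elashvili tables of $\BZ_2$-graded nilpotent orbits and performs a direct computation. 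Understanding why the excluded pair $(\mathfrak{sl}_{2n+1},\mathfrak{sl}_n\dotplus\mathfrak{sl}_{n+1}\dotplus\te_1)$ fails $(\clubsuit)$---there should be a ``subregular'' orbit at which $\dim\g_{0,e}=m+1$ while one of the $F_i(e)$ vanishes, so that the left-hand side equals exactly $2m$---simultaneously identifies the exclusion and flags the analogous critical orbits that must be scrutinised in the three viable families.
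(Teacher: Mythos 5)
Your reduction is the same as the paper's: the maximal-rank cases are vacuous, Proposition~\ref{pr:visib} applies thanks to the Kostant--Rallis facts $\langle 1\rangle$--$\langle 4\rangle$ and Theorem~\ref{thm:z2-free}, and the statement becomes the orbit-wise inequalities \eqref{eq:F3-orb}--\eqref{eq:F4-orb} for the four listed families minus the excluded one. But from that point on you only describe a strategy (``runs a combinatorial check'', ``performs a direct computation'') without carrying it out, and that verification \emph{is} the content of the theorem. Concretely, what is missing is: (a) the explicit identification of the basis covariants --- for $(\mathfrak{sl}_{2n}, \mathfrak{sl}_n\dotplus\mathfrak{sl}_n\dotplus \te_1)$ one has $F_i(v)=v^{2i}$ (even matrix powers, $1\le i\le n{-}1$), for $(\mathfrak{so}_{2n}, \mathfrak{so}_{n-1}\dotplus\mathfrak{so}_{n+1})$ the single $F_1$ is the differential of the Pfaffian, and for $(\GR{E}{6},\GR{A}{5}\times\GR{A}{1})$ the two covariants have degrees $4$ and $8$, coming from the invariants of degrees $5$ and $9$; (b) for the $\mathfrak{sl}_{2n}$ case, the computation that for a nilpotent with partition $(\eta_1,\eta_2,\dots)$ the span of the $F_i(v)$ has dimension $\lfloor(\eta_1-1)/2\rfloor$, leading (via $\dim\g_v=2\dim\g_{0,v}+1$ and the dual partition) to an inequality over all non-regular partitions that must be checked, with the subregular partition being the borderline case; (c) the non-vanishing input for the remaining two families, which in the paper comes from Richardson: the differential of $\mathsf{Pf}$ vanishes at a nilpotent iff it has at least three Jordan blocks, while the codimension-one orbit in $\N\cap\g_1$ lies in the subregular $G$-orbit with partition $(2n-1,3)$; and the exponent computations for the orbits $\GR{E}{6}(a_1)$ and $\GR{D}{5}$, which show the degree-$4$ and degree-$8$ covariants do not vanish where required.

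Without these items the proof does not exist yet; your plan of checking \emph{all} nilpotent orbits via normal $\mathfrak{sl}_2$-triples is also heavier than necessary, since \eqref{eq:F4-orb} is automatic on any orbit of codimension greater than $m$ in $\eus N$, so only the orbits of codimension at most $m$ (one orbit for $m=1$, the codimension-$1$ and $2$ orbits for $m=2$, and the uniform partition inequality in the $\mathfrak{sl}_{2n}$ case) need scrutiny. On the positive side, your treatment of \eqref{eq:F3-orb} on the open nilpotent orbit (regular nilpotent of $\g$, $F_i(v)=(\textsl{d}f_i)_v$, Kostant's independence of differentials at regular elements) is sound and agrees with the paper, and your guess about how the excluded pair $(\mathfrak{sl}_{2n+1}, \mathfrak{sl}_n\dotplus\mathfrak{sl}_{n+1}\dotplus \te_1)$ fails --- at the subregular orbit the span drops to $m{-}1$ while $\dim\g_{0,v}=m{+}1$, so the left-hand side is exactly $2m$ --- is correct; but stating where the difficulty lies is not the same as resolving it.
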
\begin{proof} 
In the maximal rank case, $\Ker\hat\phi=0$ and therefore condition $(\mathcal F_2)$ is
vacuous.  In the remaining four cases, we have to resort to explicit calculations.
By results of \cite{kr71},  Proposition~\ref{pr:visib} applies in this situation. Hence our goal is
to verify whether  Eq.~\eqref{eq:F3-orb} and \eqref{eq:F4-orb} are satisfied for 
the elements of $\N\cap\g_1$.

Let $F_1,\dots,F_m$ be a basis for $\Ker\hat\phi$. 
The above (classification-free) construction of the $F_i$'s implies that 
$\dim(\mathsf{span}\{F_1(v),\dots,F_m(v)\})=m$ for any $v\in (\g_1)_{reg}$
(see the beginning of Section\,5 in \cite{coadj}). Therefore  Eq.~\eqref{eq:F3-orb}
is true in all four cases. It remains to handle Eq.~\eqref{eq:F4-orb}.

In the $\N$-regular case, each nilpotent $G$-orbit meets $\g_1$ \cite{an}. 
Therefore we can argue in terms of  nilpotent $G$-orbits in $\g$. 
Consider all the cases in turn.

For the first two cases, the explicit form of the  $F_i$'s  is pointed out in 
\cite[Example\,5.6]{coadj}. Namely, regarding elements  $v\in \g_1$ as  matrices 
(of order $2n$ or $2n+1$), we set $F_i(v)=v^{2i}$, the usual matrix power with $1\le i\le m$.

\begin{rem}
Strictly speaking, this formula for $F_i$ is only valid if the big Lie algebra is $\mathfrak{gl}_N$.
For  $\mathfrak{sl}_N$, one have to add a correcting term in order to ensure zero trace:
$F_i(v)=v^{2i}- (\tr(v^{2i})/N)I$. However, the correcting term vanishes if $v$ is nilpotent,
which is the case below.
\end{rem}

{\sf No.\,\framebox{1}} \ 
Let $(\eta_1,\eta_2,\ldots)$ be the  partition of $2n$ corresponding
to $v\in \N\cap\g_1$. Then $v^{2i}\ne 0$ if and only if $2i\le \eta_1-1$, and the nonzero 
terms are easily seen to be linearly independent.
Therefore $\dim(\mathsf{span}\{F_1(v),\dots,F_m(v)\})=\left\lfloor\frac{\eta_1-1}{2}\right\rfloor$.
Write $(\hat\eta_1,\hat\eta_2,\ldots,\hat\eta_s)$ for the dual partition. 
Then $s=\eta_1$. 
The term $\g_v$ occurring  in Eq.~\eqref{eq:F4-orb} now becomes $\g_{0,v}$.
The general equality $\dim G{\cdot}v=2\dim G_0{\cdot}v$ means in this case
that  $\dim\g_v=2\dim\g_{0,v}+1$. 
Because $\dim\g_v=\dim (\sltn)_v=\sum_{i=1}^s\hat\eta_i^2-1$,  the required inequality
looks as follows:
\[
\frac{1}{2}\sum_{i=1}^s\hat\eta_i^2+\left\lfloor\frac{\eta_1-1}{2}\right\rfloor-2n \ge 0 ,
\]
if $v\not\in (\N\cap\g_1)_{reg}$, i.e., if $\hat\eta_1\ge 2$. Since $\sum_{i=1}^s\hat\eta_i=2n$,
it is readily transformed into 
\[
    \frac{1}{2}\sum_{i=1}^s(\hat\eta_i-1)^2 +
\bigl(\left\lfloor\frac{s-1}{2}\right\rfloor-\frac{s}{2}\bigr) \ge 0 \ ,
\]
which is  true if $\hat\eta_1\ge 2$. Indeed, if $v$ is subregular, then $\hat\eta_1=2$,
$\hat\eta_j=1$ for $j\ge 2$, and $s=2n{-}1$. Hence the LHS is zero. For all other 
non-regular $SL_{2n}$-orbits the LHS is positive.

{\sf No.\,\framebox{2}} \ 
In this case, the numerical data are slightly different: $\sum_{i=1}^s\hat\eta_i=2n+1$,
$m=n$, and $\dim\g_v=2\dim\g_{0,v}$. The required inequality is
\[
    \frac{1}{2}\sum_{i=1}^s(\hat\eta_i-1)^2 +
\bigl(\left\lfloor\frac{s-1}{2}\right\rfloor-\frac{s+1}{2}\bigr) \ge 0 \ ,
\]
if $v\not\in (\N\cap\g_1)_{reg}$.  It fails to hold only if $v$ is subregular, i.e., 
$\hat\eta_1=2$, $\hat\eta_j=1$ for $j\ge 2$, and $s=2n$.
This means that $\codim_{\eus N}\eus X_1(\eus N)=1$ and 
$(\clubsuit)$ in Proposition~\ref{pr:visib} is not satisfied.
Using this, one can prove that condition $(\mathcal F_2)$ is not satisfied for
$\Ima\hat\phi$ \ here.

{\sf No.\,\framebox{3}}  \  
Since $F_1$ is the only  basis morphism,   the validity of Eq.~\eqref{eq:F4-orb} 
reduces to the assertion that $F_1(v)\ne 0$ whenever $G_0{\cdot}v$ is 
an orbit of codimension 1 in $\N\cap\g_1$.
The map $F_1$ arises from 
the pfaffian, \textsf{Pf}, a skew-symmetric matrix of order $2n$, and, as explained above, 
$F_1(u)=\textsl{d}(\mathsf{Pf})_u$.
If $u\in \mathfrak{so}_{2n}$ is nilpotent, then $\textsl{d}(\mathsf{Pf})_u= 0$
if and only if $u$ has at least three Jordan blocks \cite[Lemma\,4.4.1]{ri2}. 
However $G{\cdot}v\subset\N$ 
is the subregular nilpotent orbit and the corresponding partition is $(2n-1,3)$.

{\sf No.\,\framebox{4}}  \  Here $m=2$ and there are two basis morphisms in $\Ker\hat\phi$.
These two are associated with basis invariants of $G=\GR{E}{6}$
of degree 5 and 9. Therefore their degrees are equal to 4 and 8. Call them
$F^{(4)}$ and $F^{(8)}$, respectively.
Here the validity of Eq.~\eqref{eq:F4-orb} 
reduces to the assertions that 

$\begin{cases} \text{if $\codim_{\N\cap\g_1}G_0{\cdot}v=1$, then
$F^{(4)}(v)\ne 0$ and $F^{(8)}(v)\ne 0$,} \\
 \text{if $\codim_{\N\cap\g_1}G_0{\cdot}v=2$, then $F^{(4)}(v)\ne 0$ or $F^{(8)}(v)\ne 0$.}
 \end{cases}$

\noindent
In the first case, $G{\cdot}v$ is the subregular nilpotent orbit, usually denoted $\GR{E}{6}(a_1)$.
In the second case, $G{\cdot}v$ is the unique orbit of codimension 4 in $\N$,
denoted $\GR{D}{5}$.  The computations we need have been performed by Richardson,
see \cite[Appendix]{ri2}. He computed the "exponents" for all but one nilpotent orbits in the 
exceptional Lie algebras. In particular, 
for the $G$-orbit of type $\GR{E}{6}(a_1)$ (resp. $\GR{D}{5}$), the exponents include $4$ and 
$8$ (resp. include $4$).
This is exactly what  we need.
\noindent 
\end{proof}

It follows from the previous exposition that if $\g=\g_0\oplus\g_1$ 
is an $\N$-regular grading, then, modulo one exception, all the hypotheses
of Theorem~\ref{thm:C_is_free} 
are satisfied for the $G_0$-module $\g_1$.
Indeed, 
by above-mentioned  results of Kostant and Rallis \cite{kr71},
the hypotheses 1,\,2,\,4, and 5  hold for all $\BZ_2$-gradings.
The third  (resp. sixth) assumption is verified in  Theorem~\ref{thm:z2-free}
(resp. Theorem~\ref{thm:z2-F2}).

Thus, applying results  of Section~\ref{sect:diff_op}
to our  situation, we obtain
\begin{thm}    \label{thm:main5}
Suppose that $\g$ is simple, $\g=\g_0\oplus\g_1$ is an $\N$-regular $\BZ_2$-grading, and
$(\g,\g_0)\ne (\mathfrak{sl}_{2n+1}, \mathfrak{sl}_n\dotplus\mathfrak{sl}_{n+1}\dotplus \te_1)$.
Set $m=\rk\g-\dim\ce$. Then 
\begin{itemize}
\item[\sf (i)] \ $\mathsf{Sym}_{\bbk[\g_1]}(\Ima\hat\phi)$ is a factorial domain of Krull dimension
$\dim\g_1+\dim\g_0-m$.
\item[\sf (ii)] \  $\spe(\mathsf{Sym}_{\bbk[\g_1]}(\Ima\hat\phi))\simeq \ov{\Ima(\varkappa)}$, where
$\varkappa: \g_1\times\g_1\to \g_1\times\g_0$ is defined by $(v,v')\mapsto (v,[v,v'])$.
\item[\sf (iii)] \ $\ov{\Ima(\varkappa)}$ is an irreducible
 factorial complete intersection and its ideal is generated by $\widehat F_i$, $i=1,\dots,m$.
\item[\sf (iv)]\  $\eus A=\eus C$, i.e., $\eus C=\mathsf{Cent}_{\ml D(\g_1)}(\bbk[\g_1]^{G_0})$ is the algebra generated by $\bbk[\g_1]$ and $\boldsymbol{\varsigma}(\g_0)$.
\item[\sf (v)]\  Both $\eus C$ and $\gr\eus C=\mathsf{Sym}_{\bbk[\g_1]}(\Ima\hat\phi)$ are free 
(left or right) $\bbk[\g_1]^{G_0}$-modules.
\end{itemize}
\end{thm}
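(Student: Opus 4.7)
My approach is to reduce Theorem~\ref{thm:main5} to an application of Theorems~\ref{thm:main4-1}, \ref{thm:main4-2}, and \ref{thm:C_is_free} once the six hypotheses of the latter are checked for the $G_0$-module $\g_1$. The case-free work is essentially a bookkeeping exercise; the real obstruction, already addressed above, is the verification of the null-cone codimension inequality $(\clubsuit)$.

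First I would verify the six hypotheses of Theorem~\ref{thm:C_is_free}. Hypotheses 1 (polynomiality of $\bbk[\g_1]^{G_0}$), 2 (the codim--$2$ property together with linear independence of the $(\textsl{d}f_i)_v$ on $(\g_1)_{reg}$), 4 (a generic fibre of $\pi_{\g_1}$ is a closed $G_0$-orbit), and 5 (finiteness of $G_0$-orbits in $\N\cap\g_1$) are all classical consequences of the Kostant--Rallis theory summarised in items $\langle 1\rangle$--$\langle 4\rangle$. Hypothesis 3 (freeness of $\Ker\hat\phi$ with a basis of $G_0$-equivariant morphisms) is exactly Theorem~\ref{thm:z2-free}. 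The one non-trivial hypothesis is 6, which asks for $\codim_{\eus N}\eus X_i(\eus N)\ge i+1$; this is supplied by Theorem~\ref{thm:z2-F2}, and it is precisely here that the pair $(\mathfrak{sl}_{2n+1},\mathfrak{sl}_n\dotplus\mathfrak{sl}_{n+1}\dotplus\te_1)$ must be excluded.

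With all six hypotheses in place, parts (i), (ii), and (iii) follow from Theorem~\ref{thm:main4-1}. The only subtlety is translating the map $\varkappa:V\times V^*\to V\times\g^*$ of that theorem into the $\BZ_2$-graded language of (ii): using the $G_0$-equivariant identifications $\g_1^*\simeq\g_1$ and $\g_0^*\simeq\g_0$ induced by the restriction of the Killing form (which is non-degenerate on each summand, as $\g_0$ and $\g_1$ are orthogonal), the moment map satisfies $\mu(v,v')=[v,v']$, so $\varkappa(v,v')=(v,[v,v'])$ as claimed. The description of the defining ideal in (iii) by the polynomials $\widehat F_i$ is then immediate from the recipe recalled just before Theorem~\ref{thm:z2-free}, where $\widehat F_i$ is the bi-homogeneous component of degree $(1,\deg f_i{-}1)$ of a defining invariant $f_i$ of $Z=\ov{G{\cdot}\g_1}\subset \g$.

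Finally, part (iv) is the Corollary to Theorem~\ref{thm:main4-2}, and part (v) is the conclusion of Theorem~\ref{thm:C_is_free}. The genuinely difficult step is the orbit-by-orbit analysis of $\eus X_i(\eus N)$ behind hypothesis 6, but that effort has already been carried out in the proof of Theorem~\ref{thm:z2-F2}; the present theorem merely harvests its fruits.
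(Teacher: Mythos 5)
Your proposal is correct and follows essentially the same route as the paper: the paper likewise deduces Theorem~\ref{thm:main5} by checking hypotheses 1, 2, 4, 5 of Theorem~\ref{thm:C_is_free} via the Kostant--Rallis results $\langle 1\rangle$--$\langle 4\rangle$, hypothesis 3 via Theorem~\ref{thm:z2-free}, and hypothesis 6 via Theorem~\ref{thm:z2-F2}, then harvesting Theorems~\ref{thm:main4-1}, \ref{thm:main4-2} (with its Corollary) and \ref{thm:C_is_free}. Your added remarks on the Killing-form identification giving $\varkappa(v,v')=(v,[v,v'])$ and on the $\widehat F_i$ generating the ideal match the paper's setup preceding Theorem~\ref{thm:z2-free}.
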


\begin{rmk}
For the symmetric pairs of maximal rank, the equality $\eus A=\eus C$
is proved in an unpublished manuscript of Levasseur \cite[Theorem\,4.4]{lev97}. In our 
exposition, this assertion also appears as a special case of Proposition~\ref{prop:loc_free}.
\end{rmk}

\begin{rmk}  \label{sl-2n+1}
As $(\mathcal F_2)$ is not satisfied for $\Ima\hat\phi$
in case of $(\mathfrak{sl}_{2n+1}, \mathfrak{sl}_n\dotplus\mathfrak{sl}_{n+1}\dotplus \te_1)$,
one might expect that some assertions of Theorem~\ref{thm:main5}
are wrong for that symmetric pair. However,
condition $(\mathcal F_1)$ still holds, and this is sufficient for proving that 
$Y=\ov{\Ima(\varkappa)}$ and it is a complete intersection whose ideal is generated 
by $\widehat F_1,\dots, \widehat F_m$.
It seems to be hard to check directly what 
is happening with assertion (iv). We are only able to prove that $\ov{\Ima(\varkappa)}$ \  
is \un{not} factorial for $(\mathfrak{sl}_3,\mathfrak{gl}_2)$. For, here $\ov{\Ima(\varkappa)}$ is 
a hypersurface in the 8-dimensional space $\g_1\times\g_0$, and the defining relation can 
be written up.
\end{rmk}

As Theorems~\ref{thm:main4-2} and  \ref{thm:C_is_free}
have rather general formulations (the group $G$ even is not 
supposed to be reductive!), it is instructive to have 
natural illustrations to it, which lie
outside the realm of the isotropy representations of symmetric pairs. In view of 
Proposition~\ref{prop:loc_free}, many representations with trivial generic stabiliser
will work. So, we concentrate on examples with non-trivial stabiliser, i.e.,
with $\Ker\hat\phi\ne 0$. 

\begin{ex}    \label{ex:theta3}
Take $G=SL_6\times SL_3$ and $V=\mathsf{R}(\varpi_2)\otimes \mathsf{R}(\varpi_1)$,
where $\varpi_i$ is the $i$-th fundamental weight.
This representation is associated with an 
automorphism of order 3 of $\GR{E}{7}$, and Vinberg's theory of $\theta$-groups \cite{vi76}, 
which is an
extension of the Kostant-Rallis theory, provides
a lot of information about it. In particular, $\bbk[V]^G$ is polynomial 
(with three generators) and $\eus N_V$ contains finitely many $G$-orbits.
Here $m=1$ and 
Proposition~\ref{pr:visib} is applicable.
The situation here resembles very much that  for $\N$-regular $\BZ_2$-gradings.
The basis covariant $F:V\to \g^*$ in $\Ker\hat\phi$ is associated
with the basis $\GR{E}{7}$-invariant $f$ of degree $10$. Therefore $\deg F=9$
and $F(v)=(\textsl{d}f)_v$ for $v\in V$. Since $m=1$, it suffices to verify
that $\dim \eus N_V-\dim \eus X_1(\eus N_V)\ge 2$. In other words, 
if $\co$ is a $G$-orbit of codimension 1 in $\eus N_V$, then we need $F\vert_\co\ne 0$.
An explicit classification of $G$-orbits in $\eus N_V$ \cite[\S\,4 Table~8]{gati} shows that 
orbits of codimension 1 lie inside of nilpotent $\GR{E}{7}$-orbits denoted by
$\GR{E}{7}(a_1)$ and $\GR{E}{7}(a_2)$.
Finally, using again Richardson's calculations \cite[Appendix]{ri2}, we obtain the required 
non-vanishing assertion.

We omit most details for this example, since we are going to consider applications of 
our theory to $\theta$-groups in a forthcoming article. 
\end{ex}

\begin{ex}    \label{ex:takif-F2}
We describe non-reductive Lie algebras whose coadjoint
representation satisfies the hypotheses of Theorem~\ref{thm:main4-2}.
This yields an incarnation of the ``$\eus A=\eus C$ phenomenon''
in the non-reductive case.
We detect such examples among $3$-wonderful algebras.
By a previous discussion 
(Remarks~\ref{rmk:3wond-conn} and \ref{essence}), hypotheses 1--3 are always 
satisfied for them. Thus, it remains only to have condition $(\mathcal F_2)$ for 
$\Ima\hat\phi$. Our examples exploit the semi-direct product construction (see  
Example~\ref{ex:new_wonder}). We start with $\es=\tri$ and set $\q=\tri\ltimes\tri$.
Then $\q$ is a quadratic $3$-wonderful algebra and $m=\ind\q=2$.
There are two basis $Q$-equivariant morphisms $F_i:\q^*\to \q$ in $\Ker\hat\phi$.
Identifying $\q^*$ and $\q$, we may regard $F_i$ as elements of $\Mor_Q(\q,\q)$.
Representing elements of $\q$ as pairs $(x,y)$, where $x,y\in\tri$, we obtain the following
explicit formulae: $F_1(x,y)=(x,y)$ and $F_2(x,y)=(0,x)$.
Then $\X_1=\{(0,y)\mid y\in\tri\}$ and $\X_2=\{(0,0)\}$. Hence condition $(\mathcal F_2)$
is satisfied. We have also checked $(\mathcal F_2)$ for 
$\mathfrak{sl}_3\ltimes\mathfrak{sl}_3$, $\mathfrak{sp}_4\ltimes\mathfrak{sp}_4$
and $\GR{G}{2} \ltimes\GR{G}{2}$.

Hopefully, this could be true if $\es$ is any simple Lie algebra, but we unable to prove it.
\end{ex}

\section{Some speculations} 
\label{sect:some_spec}

\noindent
There are two different  generalisations of Dixmier's result on adjoint vector fields in  the
context of the adjoint representation of semisimple Lie algebras.

One possibility is provided by the Levasseur-Stafford description of $\mathsf{Cent}_{\ml D(\g)}(\bbk[\g]^G)$ \cite[Theorem\,1.1]{ls}, see discussion in
Section~\ref{sect:diff_op}.
\noindent
On the other hand, the formulation given in Remark~\ref{istolkov} suggests 
the following question,  which was raised by Dixmier himself \cite[1.2]{dixm}.
\begin{qtn}
Suppose $D\in \ml D(\g)$ and $D(f)=0$ for all $f\in \bbk[\g]^G$. Is it true 
that $D\in \ml D(\g){\cdot}\boldsymbol{\varsigma}(\g)$?
\end{qtn}

\noindent
The affirmative answer  is obtained by Levasseur and Stafford \cite{ls96}. 
They proved that
\[
\mathcal K:=\{D\in \ml D(\g)\mid D(f)=0 \ \ \forall f\in \bbk[\g]^G\} 
\]
is the left ideal of $\ml D(\g)$ generated by $\boldsymbol{\varsigma}(\g)$.  
Then a similar result was 
obtained for the isotropy representation of some symmetric pairs \cite{ls99}.
To state that result, we need some preparations.
Let $\Sigma$ be the restricted root system of $(\g,\g_0)$. The following condition on
$\Sigma$
was  considered by Sekiguchi \cite{sek}:  \\[.6ex]
\hbox to \textwidth{\ $(\heartsuit)$ \hfil
     $\dim\g_\ap+\dim\g_{2\ap}\le 2$  \  for any  \ $\ap\in \Sigma$. \hfil} 
\vskip.6ex\noindent
Sekiguchi also obtained the list of corresponding symmetric pairs. 
The following is \cite[Theorem\,C]{ls99}:

\noindent
{\it Suppose that $\Sigma(\g,\g_0)$ satisfies $(\heartsuit)$.
Then 
$\mathcal K(\g_1):=\{D\in \ml D(\g_1)\mid D(f)=0 \ \ \forall f\in \bbk[\g_1]^{G_0}\}$ is the left 
ideal generated by $\boldsymbol{\varsigma}(\g_0)$.}

\noindent
Furthermore, it is proved in \cite[Section\,6]{ls99} that 
$\mathcal K(\g_1)=\ml D(\g_1)\boldsymbol{\varsigma}(\g_0)$ for 
$(\mathfrak{so}_{n+1}, \mathfrak{so}_{n})$, while the inclusion 
$\ml D(\g_1)\boldsymbol{\varsigma}(\g_0)\subset \mathcal K(\g_1)$ is strict for
$(\mathfrak{sl}_3,\mathfrak{gl}_2)$.
It is curious that  according to Sekiguchi's classification, $(\heartsuit)$ is satisfied precisely if 
$(\g,\g_0)$ is $\N$-regular except for 
$(\mathfrak{sl}_{2n+1}, \mathfrak{sl}_n\dotplus\mathfrak{sl}_{n+1}\dotplus \te_1)$.

This raises the following  questions for  representations
of connected (reductive?) groups.
There are two properties of representations $(G:V)$:

1) \ The algebra $\mathsf{Cent}_{\ml D(V)}(\bbk[V]^G)$ is generated by $\bbk[V]$ and 
$\boldsymbol{\varsigma}_V(\g)$;

2) \ The ideal $\{D\in \ml D(V)\mid D(f)=0 \ \ \forall f\in \bbk[V]^{G}\}$ is generated 
by $\boldsymbol{\varsigma}_V(\g)$.
\\
Is it true that one of them implies
another (under appropriate constraints)? At least, is there a relationship in case of
isotropy representations of symmetric pairs?

\begin{rem}  The only "bad" $\N$-regular symmetric pair
$(\mathfrak{sl}_{2n+1}, \mathfrak{sl}_n\dotplus\mathfrak{sl}_{n+1}\dotplus \te_1)$
is also distinguished by a bad behaviour of the commuting variety.
Recall that the commuting variety  is 
$\mathfrak E(\g_1)=\{ (x,y)\in \g_1\times\g_1\mid [x,y]=0\}$, and it
is irreducible for all $\N$-regular pairs but that one.
In the maximal rank case, the irreducibility is proved in \cite{jac}.
The four remaining cases (see the list in Section~\ref{sect:z2}) are considered in
\cite{py1}. This is of certain interest because there is a relationship between
the irreducibility of  $\mathfrak E(\g_1)$ and properties of the ideal
$\mathcal K(\g_1)$, see \cite[Prop.\,4.6]{ls99}.

\end{rem}

\end{document}